\definecolor{cobalt}{rgb}{0.0, 0.28, 0.67}
\definecolor{darkblue}{rgb}{0.0, 0.0, 0.55}
\def\moverlay{\mathpalette\mov@rlay}
\def\mov@rlay#1#2{\leavevmode\vtop{

                \baselineskip\z@skip \lineskiplimit-\maxdimen

                \ialign{\hfil$#1##$\hfil\cr#2\crcr}}}
 \newtheorem{theorem}{Theorem}[section]
 \newtheorem{corollary}[theorem]{Corollary}
 \newtheorem{lemma}[theorem]{Lemma}
\theoremstyle{definition}
\newtheorem{definition}[theorem]{Definition}
\newtheorem{example}[theorem]{Example}
\theoremstyle{remark}
\newtheorem{fact*}{Fact}
\DeclareMathOperator{\GL}{GL}
\DeclareMathOperator{\SL}{SL}
\DeclareMathOperator{\QM}{QM}
\DeclareMathOperator{\M}{M}
\DeclareMathOperator{\diag}{diag}
\newcommand{\N}{\mathbb{N}}
\newcommand{\C}{\mathbb{C}}
\newcommand{\FF}{\mathbb{F}}
\newcommand{\inv}{^{-1}}
\newcommand{\til}{\raise.17ex\hbox{$\scriptstyle\mathtt{\sim}$}}
\newcommand\cD{\mathcal D}
\newcommand\cH{\mathcal H}
\newcommand\beq{\begin{equation}}
\newcommand\eeq{\end{equation}}
\newcommand{\bbm}{\left[ \begin{smallmatrix}}
\newcommand{\ebm}{\end{smallmatrix} \right]}
\newcommand{\bpm}{\left( \begin{smallmatrix}}
\newcommand{\epm}{\end{smallmatrix} \right)}
\numberwithin{equation}{section}
\newcommand{\dfn}[1]{{\bf #1}\index{#1}}
\newcommand{\plangle}{\moverlay{(\cr<}}
\newcommand{\prangle}{\moverlay{)\cr>}}
\newcommand{\fpolys}[2]{\mathbb{F}{<}{#1}_1,\ldots, {#1}_{#2}{>}}
\newcommand{\fpolysinf}[1]{\mathbb{F}{<}{#1}_1,{#1}_2,\ldots{>}}
\newcommand{\unram}{unramified\xspace}
\newcommand{\Unram}{Unramified\xspace}
\newcommand{\fpolysm}[1]{\mathbb{F}\!<\!{#1}\!>\!}
\newcommand{\frats}[2]{\mathbb{F}\plangle{#1}_1,\ldots, {#1}_{#2}\prangle}
\newcommand{\fratsm}[1]{\mathbb{F}\plangle #1\prangle}
\title[Invariant noncommutative rational functions]{Noncommutative rational functions invariant under the action of a finite solvable group}
\author[I. Klep]{Igor Klep${}^1$}
\address{Igor Klep, Department of Mathematics, Faculty of Mathematics and Physics, University of Ljubljana, Slovenia}
\email{igor.klep@fmf.uni-lj.si}
\thanks{${}^1$Supported by the 
	Slovenian Research Agency grants J1-8132, N1-0057 and P1-0222. 
	Partially supported by the 
	Marsden Fund Council of the Royal Society of New Zealand.}
\author[J. E. Pascoe]{James Eldred Pascoe${}^2$}
\address{James E. Pascoe, Department of Mathematics, University of Florida, USA}
\email{pascoej@ufl.edu}
\thanks{${}^2$Partially supported by NSF MSPRF DMS 1606260.}
\author[G. Podlogar]{Gregor Podlogar}
\address{Gregor Podlogar, 
	Institute of Mathematics, Physics and Mechanics,  Ljubljana,
	Slovenia}
\email{gregor.podlogar@imfm.si}
\author[J. Vol\v{c}i\v{c}]{Jurij Vol\v{c}i\v{c}${}^3$}
\address{Jurij Vol\v{c}i\v{c}, Department of Mathematics, Texas A\&M University, USA}
\email{volcic@math.tamu.edu}
\thanks{${}^3$Supported by the NSF grant DMS 1954709.}
\date{\today}
\keywords{Noncommutative rational function,
invariant field, group representation,
 positive rational function}
 \subjclass[2010]{Primary 13J30, 16K40, 20C15; Secondary 20F16, 26C15, 47A63}
\begin{document}
	
\def\fratc#1#2{\mathbb{C}\plangle{#1}_1,\ldots, {#1}_{#2}\prangle}
\def\fratcm#1{\mathbb{C}\plangle #1\prangle}
\def\fpolyc#1#2{\mathbb{C}{<}{#1}_1,\ldots, {#1}_{#2}{>}}
\def\ort{\operatorname{U}_d(\mathbb{C})}

\begin{abstract}
This paper describes the structure of invariant skew fields for 
linear actions of finite solvable groups on free skew fields in $d$ generators. These invariant skew fields are always finitely generated, which contrasts with the free algebra case.
For abelian groups or solvable groups $G$ with a well-behaved representation theory it is shown that the invariant skew fields are free on $|G|(d-1)+1$ generators. 
Finally, positivity certificates for invariant rational functions in terms of sums of squares of invariants are presented.
\end{abstract}

\maketitle

\tableofcontents

\section{Introduction}

Classical invariant theory studies polynomials that are preserved under linear group actions \cite{Kra84,Stu08,DK15}. 
By the Chevalley--Shepard--Todd theorem \cite[Theorem 2.4.1]{Stu08}
for a finite group $G\subseteq\GL_n(\C)$, the ring of invariants 
$\C[x_1,\ldots,x_n]^G$
is isomorphic to a polynomial ring (in the same number of variables) 
if and only if $G$ is a complex reflection group. Similarly, 
one considers the rational invariants $\C(x_1,\ldots,x_n)^G$. {Noether's problem} asks when this invariant field is rational, that is,
isomorphic to a field of rational functions. This is a subtle question which heavily depends on the structure of the group; however, in recent years much progress has been made along the lines of
\cite{Sal84,C-TS07,Pey08,CHKK10,Mor12,CHHK15,JS+}.
Examples of invariant fields give the simplest negative answers to the {L\"uroth problem}, i.e., examples of unirational varieties which are not rational \cite{AM72,Sal84}. L\"uroth's problem has a positive answer in one variable (every field between $K$ and $K(x)$ must be $K$ or 
purely transcendental over $K$), and in two variables over $\C$. In complex analysis, these problems pertain to complex automorphisms and holomorphic equivalence of domains, geometry of symmetric domains and realizations of symmetric analytic functions \cite{GR08,satake2014algebraic,AY17}. On the practical side, symmetries are regularly applied in control system design to analyze a system by decomposing it into lower-dimensional subsystems \cite{GM85,vdS87,Kwa95}.

We study the free noncommutative analogue of the above program over an algebraically closed field $\mathbb{F}$ of characteristic $0.$
Let $x = (x_1 \ldots, x_d)$ be a tuple of noncommuting indeterminates.
A \dfn{noncommutative polynomial} is a formal linear combination of words in $x$ with coefficients in  $\mathbb{F}.$
For example,
	$$17x_1^4 + 13x_1x_2 - 9x_2x_1 + 39.$$
We denote the free associative algebra of noncommutative polynomials on $d$ generators by $\fpolys{x}{d}.$
A \dfn{noncommutative rational expression} is a syntactically valid combination of noncommutative polynomials, arithmetic operations $+, \cdot, {}^{-1},$ and parentheses, e.g.
$$\big(216x_1^3x_2^4x_1^5-((x_1x_2 - x_2x_1)^{-1} + 3)^8\big)^{-1}.$$
These expressions can be naturally evaluated on $d$-tuples of matrices. An expression is called nondegenerate if it is valid to evaluate it on at least one such tuple of matrices. Two nondegenerate expressions with the same evaluations wherever they are both defined are equivalent.
A \dfn{noncommutative rational function} is an equivalence class of a nondegenerate rational expression. They form the \dfn{free skew field} $\frats{x}{d}$, which is the universal skew field of fractions of the free algebra $\fpolys{x}{d}$. We refer the reader to \cite{BGM05,Coh06,HMV06,BR11,KVV12,Vol18}
for more on the free skew field.

We analyze the invariants in a free skew field
under the action of a finite solvable group.
For example, a symmetric noncommutative rational function $r$ in two variables satisfies the equation
$$r(x,y) = r(y,x).$$
Naturally, this corresponds to an action of the symmetric group with two elements, $S_2$, and we denote
the ring of symmetric noncommutative rational functions in two variables by
$\fratsm{x,y}^{S_2}.$ Similarly, $\fpolysm{x,y}^{S_2}$ is
the ring of symmetric noncommutative polynomials. By a theorem of Wolf, see e.g. \cite[\S 6.8]{Coh06}, we have
$$\fpolysm{x,y}^{S_2} \cong \fpolysinf{u}.$$
In fact, the polynomial invariants $\fpolys{x}{d}^G$ are, except in some trivial cases, always isomorphic to a free algebra in countably infinitely many variables \cite[\S 6.8]{Coh06}. As was observed by Agler and Young \cite{AY14} (see also \cite{CPT-D18} and \cite{AMY18}),
$$\fpolysm{x,y}^{S_2} 
\subseteq
\fratsm{x +y, (x-y)^2, (x-y)(x+y)(x-y)},$$
where
$\fratsm{x+y, (x-y)^2, (x-y)(x+y)(x-y)}$ denotes the subfield of 
$\fratsm{x, y}$ generated by $x+y, (x-y)^2,$ and $(x-y)(x+y)(x-y).$
What is perhaps less clear, and follows from our
Theorem \ref{abelianisomorphism}, is that
\begin{align*}
\fratsm{x,y}^{S_2} & =\fratsm{x+y, (x-y)^2, (x-y)(x+y)(x-y)} \\
& \cong \fratsm{a, b, c}.
\end{align*}
Moreover, the isomorphism $\varphi: \fratsm{a, b, c} \rightarrow \fratsm{x,y}^{S_2}$
satisfies $\varphi(a) = x+y,$ $\varphi(b) = (x-y)^2,$
and $\varphi(c) = (x-y)(x+y)(x-y).$

The equality and further isomorphism are remarkable for a few reasons.
First, in the noncommutative case, it is nontrivial to show that the set of symmetric noncommutative polynomials generate the free skew field of symmetric rational functions. For example, expressions for
relatively simple symmetric rational functions may require complicated expressions in terms of the generators, as is shown by the equalities expressing
$x^{-1} + y^{-1}:$
\begin{align*}
		x^{-1} + y^{-1} & = 4(x+y - (x-y)^2((x-y)(x+y)(x-y))^{-1}(x-y)^2)^{-1} \\
		& = \varphi((a - bc^{-1}b)^{-1})
\end{align*}
(It is somewhat hard to even find an elementary way of showing the equality;  we manufactured it using realization theory which will be a key ingredient of the proof of our main result.)
Secondly, it is interesting that $x+y, (x-y)^2, (x-y)(x+y)(x-y)$ satisfy no hidden rational relations, which follows from a result of Lewin \cite[Theorem 1]{Lew74}. Unlike in the commutative case, it does not suffice to test only polynomial relations; for example, $x,xy,xy^2$ satisfy no polynomial relations and generate a proper free subalgebra in $\fpolysm{x,y}$, while they satisfy a rational relation and the skew field they generate in $\fratsm{x,y}$ is $\fratsm{x,y}$ itself. The theory of symmetric noncommutative functions was first initiated through quasideterminants in \cite{GKLLRT95}, and their combinatorial aspects were further studied in \cite{RS06,BRRZ08}. For the construction of a noncommutative manifold corresponding to symmetric analytic noncommutative functions, and associated Waring--Lagrange theorems and Newton--Girard formulae, see \cite{AMY18}.

There are also potential applications of understanding invariant subfields of the free skew field in theoretical control theory. Noncommutative rational functions naturally arise as transfer functions of linear systems that evolve along free semigroups \cite{BGM05,BGM06}. When such a system admits additional symmetries (described by a group action), so does its associated transfer function. If the generators for this group action are known, then the transfer function can be expressed in terms of invariant building blocks. Thus it is likely that, as with classical discrete-time linear systems, this decomposition of the transfer function leads to a decomposition of the linear system into lower-dimensional subsystems, which make for a simpler analysis.

\subsection{Main results}

Let $G \subseteq \GL_d(\FF)$ be a finite group.
The \dfn{skew field of rational invariants}, denoted by $\frats{x}d^G$, is the skew field of elements of $\frats xd$
that are invariant under the action of $G,$ that is, 
\[
\frats{x}d^G=\{r\in\frats xd \colon 
r(g\cdot x) = r(x) \text{ for all }g \in G\}.
\]

Our first main result states that 
for solvable groups  the 
skew field of rational invariants
 is always finitely generated:

\begin{theorem}\label{t:solv}

Let $G\subseteq \GL_d(\FF)$ be a finite solvable group. Then 
the skew field of invariants
 $\frats{x}d^G$ is finitely generated.

\end{theorem}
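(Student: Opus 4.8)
The plan is to reduce the solvable case to the abelian (cyclic) case by induction on the derived length of $G$, using the structure of a finite solvable group as an iterated extension by cyclic quotients. Concretely, since $G$ is solvable there is a normal subgroup $N \trianglelefteq G$ with $G/N$ cyclic of prime order, and $N$ is again solvable of smaller derived length. By induction on $|G|$, the invariant skew field $\frats{x}{d}^N$ is finitely generated; say $\frats{x}{d}^N = \fratsm{r_1,\ldots,r_m}$ for some $r_i \in \frats{x}{d}$. The remaining action of the cyclic group $H := G/N$ of prime order $p$ on the skew field $\frats{x}{d}^N$ must then be handled, and the key point is to show that the fixed field of this residual cyclic action is still finitely generated.

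The first main step is therefore to establish a "base case / single-step" lemma: if a finitely generated skew field $K = \fratsm{r_1,\ldots,r_m}$ (sitting inside some $\frats{x}{d}$) carries an action of a cyclic group $H = \langle\sigma\rangle$ of prime order $p$, then $K^H$ is finitely generated. For this I would try to adapt the classical commutative trick: form the "norm-like" and "trace-like" combinations of the generators under the $H$-action. In the noncommutative setting one cannot simply multiply, but one can use the averaging idempotents in the group algebra $\FF H$ (here $\FF$ contains the $p$-th roots of unity since it is algebraically closed), decomposing the $\FF H$-module spanned by the $\sigma^k r_i$ into isotypic pieces. Elements transforming by a nontrivial character $\chi$ of $H$ are not invariant, but products and quotients of two such elements with inverse characters are; so from each $\chi$-isotypic generator $s$ one forms $s \cdot t^{-1}$ for a fixed reference element $t$ in the same isotypic component, together with the $H$-invariant "trace" elements. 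One then argues, as in the commutative proof of finite generation of invariant fields of cyclic actions (a Galois-descent argument, valid here because $K/K^H$ is a Galois extension of skew fields of degree $p$ by the noncommutative Galois theory of Cohn/Jacobson), that these finitely many elements generate $K^H$.

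The second step is to verify that the induction actually closes: one needs $\frats{x}{d}^G = (\frats{x}{d}^N)^{G/N}$, which is immediate, and one needs that $\frats{x}{d}^N$ is not merely abstractly a skew field but a skew \emph{sub}field of $\frats{x}{d}$ on which $G/N$ acts by skew-field automorphisms — this follows because $N$ is normal, so $G$ permutes $\frats{x}{d}^N$, and $N$ acts trivially on it. One should also record the base of the induction ($N$ trivial, $G$ cyclic), which is exactly the cyclic case handled by the lemma, or more generally the abelian case covered by Theorem \ref{abelianisomorphism}. Assembling these, finite generation propagates up the chief series $1 = G_0 \trianglelefteq G_1 \trianglelefteq \cdots \trianglelefteq G_k = G$ with cyclic quotients.

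The main obstacle I anticipate is the single-step lemma for a cyclic action of prime order on an \emph{arbitrary} finitely generated skew field, rather than on $\frats{x}{d}$ itself: one must be careful that the isotypic decomposition and the norm/trace constructions really produce finitely many generators, and that no rational relations are lost or created. The cleanest route is probably to invoke the existence of a Galois correspondence for the degree-$p$ extension $\frats{x}{d}^N / \frats{x}{d}^G$ (outer automorphisms, Artin-type theorem for skew fields) to guarantee $[\,\frats{x}{d}^N : \frats{x}{d}^G\,] = p < \infty$, and then note that a skew field which is finitely generated as a \emph{field} and finite-dimensional over a subfield forces that subfield to be finitely generated as well — a noncommutative analogue of the classical fact, which may itself require a short argument via a reduced-basis / trace-form computation. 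Getting this last descent step right, in the absence of commutativity, is where the real work lies.
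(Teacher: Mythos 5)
Your overall architecture matches the paper: induct up the derived (or chief) series of $G$, handling one abelian (or cyclic) quotient at a time, with $\frats{x}{d}^G = (\frats{x}{d}^N)^{G/N}$ because $N$ is normal. The divergence is in how you propose to handle the single inductive step, and that is where the gap lies.

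Your main proposed mechanism for the step is Galois descent: use the Artin-type theorem for skew fields to get $[\,\frats{x}{d}^N : \frats{x}{d}^G\,]=p$ and then invoke a noncommutative analogue of ``a finitely generated field finite over a subfield has finitely generated subfield.'' This last statement is the heart of the matter and is exactly what you cannot take for granted. The commutative proof of that fact relies essentially on transcendence degree and Noether normalization; there is no workable transcendence-degree theory for subskewfields of $\frats{x}{d}$ (indeed the subfield lattice of the free skew field is notoriously wild -- it contains free skew subfields on any number of generators, including countably infinitely many, inside any nonabelian $\frats{x}{d}$), so the descent step is not a short ``trace-form computation'' but an open-ended claim. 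You flag this yourself as ``where the real work lies,'' and that assessment is correct: as written, the argument does not close.

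Your secondary idea -- isotypic decomposition of the span of the $\sigma^k r_i$ under $\FF H$, followed by forming products $s\,t^{-1}$ of two elements transforming by inverse characters -- is genuinely in the same spirit as the paper (the paper's diagonal matrices $M$, $\hat M$ have entries $m_\chi$, $m_{\chi^{-1}}$ that are exactly such semi-invariant monomials). But the crux is proving that the finitely many invariants you produce actually generate $\frats{x}{d}^G$, and you offer no mechanism for that. The paper's mechanism is realization theory: the inductive hypothesis is not merely ``$\frats{x}{d}^H$ is finitely generated'' but the stronger statement that every $H$-invariant admits a descriptor realization $c^*(A_0+\sum_i A_i q_i)^{-1}b$ over finitely many fixed $H$-invariants $q_i$. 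Starting from such a realization of a $G$-invariant $\tilde r$, the paper averages over $N=G/H$ to get a block realization, then conjugates by $M\otimes I$ and $\hat M\otimes I$ so that the new pencil $\hat M M\otimes A_0 + \sum_i \hat M\tilde\chi_i M q_i\otimes A_i$ has $G$-invariant coefficients; those coefficients become the new generating set $\tilde q_j$, and nondegeneracy of the new realization is checked directly. This concrete rewriting is what your proposal is missing. If you want to salvage your approach, the clean fix is to replace the Galois-descent step with the realization-conjugation argument (and note that you do not actually need to reduce to prime-order cyclic quotients -- the abelian quotient step works directly and is what the paper uses).
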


For solvable groups $G$ with a well-behaved
representation theory we can give a finer structure of the invariant skew field $\frats xd^G$.

\begin{definition}

Let $G$ be a finite group.

\begin{enumerate}

\item Let $N$ be a nontrivial normal abelian subgroup. We say that $G$ is \dfn{\unram} over $N$ if
	for every irreducible representation $\pi$ of $G,$ $\pi|_N$ is trivial or $\pi|_N$ splits into distinct irreducible representations
	of $N.$

\item We say a group $G$ is \dfn{totally \unram} if either $G$ is the trivial group or there exists a nontrivial normal abelian subgroup $N$
such that $G$ is \unram over $N$ and $G/N$ is totally \unram.

\end{enumerate}

\end{definition}

\begin{theorem}\label{thm:unram}

Let $G$ be a totally \unram group acting on $\mathbb{F}^G$ via the left regular representation.
Then
$$\frats{x}{|G]}^G \cong \frats{u}{|G|(|G|-1)+1}.$$

\end{theorem}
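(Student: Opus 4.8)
The plan is to proceed by induction on the derived length of $G$, or more precisely by unwinding the recursive structure built into the definition of ``totally unramified.'' The base case is the trivial group, where $\frats{x}{1}^G = \frats{x}{1}$ and the claimed formula reads $\frats{u}{1\cdot 0 + 1} = \frats{u}{1}$, which is trivially true. For the inductive step, fix a nontrivial normal abelian subgroup $N \trianglelefteq G$ such that $G$ is unramified over $N$ and $G/N$ is totally unramified. The strategy is a two-stage descent: first pass to the $N$-invariants $\frats{x}{|G|}^N$, then take the residual action of the quotient $Q = G/N$ on that skew field and apply the inductive hypothesis.

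\textbf{Stage one: the abelian subgroup.} First I would analyze $\frats{x}{|G|}^N$. Since $N$ is abelian and $\FF$ is algebraically closed of characteristic $0$, the restriction to $N$ of the left regular representation of $G$ decomposes into one-dimensional characters, and the unramified hypothesis guarantees the multiplicities are controlled: $\FF^G|_N \cong \bigoplus_{\chi \in \widehat N} \chi^{\oplus m_\chi}$ with each $m_\chi = [G:N]/(\text{something})$, in fact $m_\chi$ depends only on the $G$-orbit of $\chi$, and crucially the unramified condition forces each irreducible $G$-constituent to contain each character of its $N$-restriction at most once. Diagonalizing, $N$ acts on a new coordinate system $y = (y_1,\dots,y_{|G|})$ by scaling each $y_i$ by a root of unity. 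Now I invoke Theorem~\ref{abelianisomorphism} (referenced in the introduction as the abelian case), which describes $\frats{y}{n}^N$ for such a diagonal action of a finite abelian group $N$: it is free on $|N|(n-1)+1$ generators when the action is, in the appropriate sense, the regular one — but here the action of $N$ on $\FF^G$ is $[G:N]$ copies of the regular representation of $N$ bundled together. So the first genuine computation is to determine $\frats{y}{|G|}^N$ as a free skew field; I expect it to be free on $|N|\big(|G|-1\big)+1 = |N||G| - |N| + 1$ generators, via a Reynolds-type construction giving explicit invariant generators (scaled monomials of total ``weight zero'').

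\textbf{Stage two: the quotient.} The group $Q = G/N$ then acts on $\frats{y}{|G|}^N \cong \frats{v}{|N||G|-|N|+1}$, and one must check that this residual action is (equivalent to) the left regular representation of $Q$ on $\FF^Q$ in enough generators, or at least to a representation for which the inductive hypothesis applies — this is where the bookkeeping must be done carefully, because the inductive hypothesis as stated is specifically about the left regular representation. The key numerical coincidence to verify is that $|Q|$ copies of the regular representation of $Q$ live inside the $|N||G|-|N|+1$ generators produced in stage one in a way that, after taking $Q$-invariants, yields
\begin{equation*}
|Q|\Big( \tfrac{|N||G|-|N|+1-1}{|Q|} - 1\Big) \cdot |Q| + 1,
\end{equation*}
and one wants this to collapse to $|G|(|G|-1)+1$. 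Indeed $|N||G|-|N| = |N|(|G|-1)$ and $|N|(|G|-1)/|Q| \cdot |Q| = |N|(|G|-1)$; the arithmetic should close up precisely because $|G| = |N||Q|$. I would organize this as: (i) identify the residual $Q$-action and check it is ``regular enough,'' (ii) apply the inductive hypothesis (or rather a mild generalization of Theorem~\ref{thm:unram} allowing multiples of the regular representation), (iii) do the final index count.

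\textbf{Main obstacle.} The hard part will be Stage two, specifically making the transition between the two invariant-taking operations compatible: after taking $N$-invariants and re-coordinatizing $\frats{y}{|G|}^N$ as a free skew field, the induced $Q$-action is a priori only a skew-field automorphism action, and I must prove it is conjugate to a \emph{linear} action (on the new generators) of the required type before the inductive hypothesis can be applied. This is precisely the point where realization theory (advertised in the introduction) should enter: one represents invariant rational functions via linear state-space realizations, tracks how $G$ acts on the realization data, and uses the unramified hypothesis to block-diagonalize the realization so that the $Q$-action becomes visibly linear. Establishing this linearization — and showing the multiplicities work out so that Stage two is governed by a regular representation of $Q$ — is the crux; once it is in place, the formula $|G|(|G|-1)+1$ falls out of the index arithmetic above, using $|G|=|N||Q|$ and the two base computations. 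A secondary subtlety is that Theorem~\ref{thm:unram} as stated handles only the left regular representation, so one either strengthens it during the induction to cover finite multiples of the regular representation, or arranges the descent so that at each stage exactly one copy survives; I would opt for the former, proving the more general statement ``$\frats{x}{k|G|}^G \cong \frats{u}{k|G|(|G|-1)/\,?\,\dots}$'' — more cleanly, that $G$-invariants of $k$ copies of the regular representation form a free skew field, with the generator count dictated by the Euler-characteristic-type formula $|G|(\dim - 1) + 1$ — so that the induction carries the right hypothesis.
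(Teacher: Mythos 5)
Your two-stage plan (descend to $N$-invariants via the abelian case, then apply induction to the $G/N$-action on the new generators) is indeed the skeleton of the paper's argument, and you have correctly located the crux: one must show the induced $G/N$-action is linear on a suitable generating set, and the statement being proved must be strengthened so that the induction closes. However, your proposed strengthening — to multiples of the left regular representation — is not the right one, and in fact cannot work. After one step of the descent, the number of free generators of $\frats{x}{d}^N$ is $|N|(d-1)+1$, and this is in general not divisible by $|G/N|$, so the induced $G/N$-representation on those generators cannot possibly be a multiple of the regular representation. Concretely, for $G=S_4$, $N$ the Klein four group and $d=|G|=24$, one gets $4\cdot 23+1=93$ generators and $|G/N|=6 \nmid 93$. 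What the paper actually does is introduce the recursively-defined notion of a \emph{complete representation} (Section~\ref{ssec:complete_representations}) and prove the more general Theorem~\ref{thm:unram2}: the invariant field of \emph{any} complete representation on $\FF^d$ is free on $|G|(d-1)+1$ generators. The induced $G/N$-representation on the generators of $\frats{x}{d}^N$ is identified explicitly (it is $\pi_{N^\tau}\oplus(\pi_B\otimes\pi\oplus\pi\otimes\pi_B)_{N^\tau}\oplus(\pi_B\otimes\pi\otimes\pi_B)_{N^\tau}$, not a multiple of the regular representation), and completeness is precisely the condition ensuring this representation is again complete for $G/N$, so the induction closes. The present Theorem~\ref{thm:unram} then follows by a short Clifford-theory argument showing the left regular representation of a totally unramified group is complete.

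A secondary misattribution: you expect realization theory to supply the linearization of the quotient action. In the paper that step (Lemma~\ref{lem:linear}) is a direct computation with the explicit generators $b_\chi b_\eta b_{(\chi\eta)^{-1}}$, $b_\chi j_i b_{(\chi\eta_i)^{-1}}$ from Corollary~\ref{cor: abelbasis}, using only that $N$ is normal (so $g\cdot b_\chi$ is a scalar times $b_{\chi^g}$) — no state-space realizations are involved at this stage. Realization theory is used earlier, to establish the abelian base case (that polynomial invariants generate all rational invariants) in Theorem~\ref{abelianisomorphism}, and to prove finite generation for general solvable groups in Theorem~\ref{t:solv}. Finally, the displayed generator count in your ``Stage two'' is garbled (it introduces an extraneous factor of $|Q|$); the correct arithmetic is simply $|G/N|\bigl((|N|(d-1)+1)-1\bigr)+1 = |G/N|\cdot|N|(d-1)+1 = |G|(d-1)+1$, which is what the paper's induction produces once the complete-representation machinery is in place.
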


Examples of totally \unram groups include abelian groups, $S_3,$ $S_4,$ and dihedral groups; furthermore, all groups of order $<24$ are totally \unram.
For these, noncommutative Noether's problem is tractable in the sense that for left regular representations the answer is affirmative. In fact, we conjecture that 
invariant skew fields of finite
groups are always free.

\def\Z{\mathbb Z}
The smallest non-example of a totally \unram group is the group $\SL_2(\FF_3)$ of order $24$. The next non-examples are given by eight groups of order $48$:
the four groups in the isoclinism class of $(\Z_4\times D_4)\rtimes\Z_2$,
the isoclinism class of $\GL_2(\mathbb F_3)$ with two groups,
and the isoclinism class of $\Z_2\times\SL_2(\mathbb F_3)$ again containing two groups. Then we find 
32 groups that  are not totally \unram
among the 267 groups of order 64. The smallest odd order examples are found among groups with 243 elements.

Finally, we present applications of the above results (for $\FF=\C$) to real algebraic geometry. We provide positivity certificates for invariant noncommutative rational functions in terms of invariant weighted sums of squares. We say that $r\in \fratc{x}{d}$ is {\bf positive} if for every $n\in\N$ and a tuple of hermitian matrices $X\in\M_n(\C)^d$, $r$ is defined at $X$ and $r(X)$ is a positive semidefinite matrix. The following is a solution of the invariant free rational Hilbert's 17th problem.

\begin{theorem}\label{c:h17}
Let $G \subset\ort$ be a finite solvable group.
Then there are $q_1, \ldots, q_N \in \fratc{x}{d}^G$ such that
every positive rational function $r \in \fratc{x}{d}^G$ is of the form
\[r = \sum_j \tilde{s}_j^* q_{n_j} \tilde{s}_j\]
for some $\tilde{s}_j \in \fratc{x}{d}^G$.
\end{theorem}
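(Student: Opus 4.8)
The strategy is to reduce the invariant Hilbert's 17th problem to the non-invariant one, which is already known (the free rational Hilbert 17th problem, due to Klep--Pascoe--Volčić and others), by averaging over the group $G$. First I would invoke Theorem \ref{t:solv}: since $G$ is finite solvable, the invariant skew field $\fratc{x}{d}^G$ is finitely generated, say by $r_1,\dots,r_m$. The key observation is that $\fratc{x}{d}^G$ is itself a skew field, and in fact one that carries a natural involution (the conjugate-transpose involution $r\mapsto r^*$ descends to it because $G\subset\ort$ consists of real orthogonal — hence $*$-invariant — matrices, so the $G$-action commutes with $*$). So $\fratc{x}{d}^G$ is a finitely generated skew field with involution in which positivity of an element is detected by evaluations on tuples of hermitian matrices.

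\textbf{Reduction to a free skew field.} The cleanest route is to use the isomorphism results: when $G$ acts by a left regular representation and is totally \unram, Theorem \ref{thm:unram} gives $\fratc{x}{|G|}^G\cong\fratc{u}{N}$ for $N=|G|(|G|-1)+1$, and this isomorphism is implemented by a rational map sending the free generators $u_i$ to explicit invariant rational functions $q_i\in\fratc{x}{d}^G$; moreover one checks the isomorphism is a $*$-isomorphism (the $q_i$ can be chosen $*$-symmetric or paired up under $*$, as in the $S_2$ example where $b=(x-y)^2$ and $c^* $ relates to $c$). Then a positive $r\in\fratc{x}{d}^G$ pulls back to a positive element $\hat r\in\fratc{u}{N}$: positivity transfers because evaluating $\hat r$ on hermitian tuples $U$ corresponds, via the realization-theoretic substitution $u_i\mapsto q_i(X)$, to evaluating $r$ on hermitian tuples $X$, and every hermitian value of $\hat r$ is (generically, hence by continuity everywhere it is defined) of this form. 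By the non-invariant free rational Hilbert 17th problem, $\hat r=\sum_j \hat s_j^* \hat s_j$ for some $\hat s_j\in\fratc{u}{N}$. Pushing this identity forward through the $*$-isomorphism $\varphi$ gives $r=\sum_j (\varphi(\hat s_j))^*\,(\varphi(\hat s_j))$ with $\varphi(\hat s_j)\in\fratc{x}{d}^G$, which is a certificate of the desired form (here one can even take $N=1$ and $q_1=1$ if one is content with a "bare" sum of squares).

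\textbf{Handling general solvable $G$.} For a general finite solvable $G\subset\ort$, not necessarily totally \unram and not necessarily a regular representation, Theorem \ref{thm:unram} need not apply, so $\fratc{x}{d}^G$ need not be free. Here I would instead argue directly inside $\fratc{x}{d}^G$: by Theorem \ref{t:solv} it is a finitely generated skew field with involution, and one shows it embeds $*$-isomorphically into some free skew field $\fratc{y}{k}$ in a positivity-preserving way — e.g. by realizing the finitely many generators $r_i$ and their inverses by a single large descriptor realization whose coefficient matrices generate $\fratc{x}{d}^G$, or by appealing to Lewin's theorem (cited in the excerpt) that a subfield of $\fratc{x}{d}$ generated by $m$ generic elements is free on those generators. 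One must verify that the positive cone of $\fratc{x}{d}^G$ (positivity on hermitian tuples) is the restriction of the positive cone of the ambient free skew field; this follows because any hermitian evaluation of an invariant function can be realized along a $G$-symmetric orbit, and conversely restricting to the invariant subfield only shrinks the set of test tuples. Then apply the free rational Hilbert 17th theorem in $\fratc{y}{k}$ and intersect the resulting identity back with $\fratc{x}{d}^G$; the weights $q_n$ arise as the (finitely many) invariant denominators needed to clear the $s_j$ into $\fratc{x}{d}^G$.

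\textbf{Main obstacle.} The delicate point is the \emph{positivity transfer}: one needs that $r\in\fratc{x}{d}^G$ is positive (as an element of the big skew field, tested on all hermitian tuples) if and only if its image in the auxiliary free skew field is positive there. The "only if" direction requires showing that the hermitian evaluation points of the free skew field $\fratc{y}{k}$ (or $\fratc{u}{N}$) are, up to the domain of definition and up to density, exactly the images of $G$-symmetric hermitian tuples $X$ — i.e. that the invariant realization is \emph{full-rank / minimal enough} that its values sweep out all of $\M_n(\C)_{\mathrm{herm}}$ as $X$ ranges over hermitian tuples and $n$ grows. Establishing this surjectivity-up-to-density, and making sure the $*$-structures on the two skew fields match on the nose (not just up to an automorphism that might distort positivity), is where the real work lies; the rest is bookkeeping plus a black-box application of the previously known free rational Positivstellensatz.
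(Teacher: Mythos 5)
Your proposal takes a genuinely different route from the paper's, but it contains gaps that you yourself flag without resolving, and some steps rely on facts that the paper does not establish and in fact suggests may be false.

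The paper's proof is remarkably short and goes the other way around. It first applies the \emph{non-invariant} free rational Hilbert 17th theorem (cited as \cite[Theorem 4.5]{KPV}) to obtain $r=\sum_i s_i^*s_i$ with $s_i\in\fratc{x}{d}$ \emph{not} invariant, and then ``symmetrizes'' this certificate through realization theory: Corollary~\ref{cor:sos1st} shows, via the explicit matrix $R_G$ from Theorem~\ref{thm:extendrealization}, that any such bare SOS in the ambient skew field can be rewritten with a constant middle matrix $Q$ replaced by the $G$-invariant middle matrix $Q_G=(R_G\otimes P)^*(R_G\otimes P)$ and all the affine data $\tilde A_j\tilde q_j$ invariant. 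The finitely many weights $q_n$ are the diagonal entries arising from diagonalizing the fixed hermitian matrix $R_G^*R_G$ over the skew field $\fratc{x}{d}^G$; since $R_G$ depends only on $G$ and not on $r$, the weight set is uniform. No isomorphism onto a free skew field is invoked, and no positivity is transferred across an isomorphism.

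Your approach instead tries to transfer the positivity problem to a free skew field $\fratc{u}{N}$ (or $\fratc{y}{k}$) and apply the non-invariant result there. There are three concrete problems. First, the positivity transfer you flag as the ``main obstacle'' is a genuine gap: positivity of $r$ means positivity on all hermitian tuples $X\in\M_n(\C)^d$, but under $u_i\mapsto q_i(X)$ the image of hermitian $X$'s is a proper, non-dense subset of hermitian tuples in the $u$-variables, so there is no a priori reason why the pullback $\hat r$ should be positive on \emph{all} hermitian tuples in $\fratc{u}{N}$. Nothing in the paper establishes this surjectivity-up-to-density, and without it the reduction fails. Second, for general solvable $G$ the invariant skew field $\fratc{x}{d}^G$ need not be free: Theorem~\ref{thm:unram2} requires a complete representation, the paper's own example after Example~\ref{exa:unranoncom} explicitly says they were unable to determine whether $\fratsm{x,y}^{\SL_2(\mathbb F_3)}$ is free, and your appeal to Lewin's theorem misapplies it — Lewin's theorem concerns \emph{generic} generators, whereas if $\fratc{x}{d}^G$ were not free on $m$ generators then its given generators are precisely not generic. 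Third, even in the totally unramified / regular-representation case you would need the isomorphism of Theorem~\ref{thm:unram} to be a $*$-isomorphism; this is not proved and would require a separate argument (the generators $b_\chi,j_i$ are complex linear combinations and need not behave well under the involution). Finally, your parenthetical claim ``one can even take $N=1$ and $q_1=1$'' (bare sums of squares in the invariant subfield) is stronger than the paper's theorem and appears to be false: already in the $G=S_2$ case the weight $b=(x-y)^2$ appears non-trivially, and there is no evident way to write $b$ as a bare sum of hermitian squares of $S_2$-invariants. The paper's realization-theoretic symmetrization sidesteps all of these issues by never leaving $\fratc{x}{d}$.
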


Furthermore, we establish Positivstellens\"atze for invariant semialgebraic sets of bounded operators on a separable Hilbert space. Corollary \ref{prop:posSS} treats strict positivity when the invariant constraints satisfy an Archimedean condition, and Corollary \ref{prop:ConvexPosSS} certifies positivity on convex domains (i.e., those given by linear matrix inequalities).

\subsection{Reader's guide}

The paper is organized as follows. After Section \ref{sec:prelim} with preliminaries we establish
Theorem \ref{t:solv} in Section \ref{sec:solvable}.
Theorem \ref{thm:unram} for abelian groups $G$
is proved
in Section \ref{sec:abelian}, followed
by the proof of the theorem itself,
and a strengthening thereof (Theorem \ref{thm:unram2}) in 
Section \ref{sec:unram}. 
Finally, Section \ref{sec:pos} discusses
relationships with real algebraic geometry;  positivity certificates for invariant positive rational functions
can be chosen to be invariant themselves.

\subsection*{Acknowledgment}
The authors thank Primo\v z Moravec and John Shareshian for insightful discussions on finite groups, and the anonymous referee for suggestions that vastly improved the presentation of the paper.

\section{Preliminaries on group representations}\label{sec:prelim}

In this section we give some background and introduce notions which will be necessary for the sequel.

\subsection{Pontryagin duality}

Let $N$ be a finite abelian group.
Define $N^*$ to be the group of multiplicative homomorphisms $\chi: N \rightarrow \mathbb{F
}^*.$
The group $N^*$ is non-canonically isomorphic to $N.$
Every representation $\pi$ of $N$ decomposes into a direct sum of elements of $N^*,$ that is, $N^*$ consists of all the irreducible representations
of $N.$ For more details see \cite{Ser77,Rud90}.

\subsection{Complete representations}\label{ssec:complete_representations}

A faithful representation $\pi$ of a group $G$ is {\bf complete} if 
there is a direct summand $\pi_B$ of $\pi$ (i.e., $\pi$ decomposes as $\pi_B \oplus \pi_J$ for some subrepresentation $\pi_J$)	
and there is a nontrivial normal abelian subgroup $N\subseteq G$ such that:

\begin{enumerate}

	\item

$\pi_B|_N$ contains exactly the nontrivial representations of $N$ as direct summands with multiplicity $1$;

\item The representation 
\[\pi_{N^\tau}
\oplus (\pi_{B}\otimes \pi \oplus \pi \otimes \pi_B)_{N^\tau}
\oplus (\pi_{B}\otimes \pi \otimes \pi_B)_{N^\tau}\]
is a complete representation of $G/N$.
Here, for a representation $\varrho$,
$\varrho_{N^\tau}$ denotes the summands of $\varrho$
which are trivial on $N$ and thus naturally gives rise to a representation
of $G/N$.

\end{enumerate}

The notion of a complete representation is rather technical; the proper motivation is unveiled in Lemma \ref{lem:linear}, where completeness ensures linearity of certain induced group actions. In any case, complete representations should be viewed as a companion concept to the more natural definition of an \unram group.
Namely, as seen in the proof of Theorem \ref{thm:unram} below, the left regular representation of a totally \unram group is complete.

\subsection{Unramified groups}\label{ssec:unram}

The interplay between subgroups and representations is the subject of Clifford theory, see e.g.~\cite{Isa76}. We now give a reinterpretation of what it means for $G$ to be \unram over a normal abelian subgroup $N.$
There is a natural action of $G/N$ on $N^*$ given by
\[
gN: \chi \mapsto (n\mapsto \chi(g^{-1}n g))=\chi^g.
\]
Let $\pi$ an irreducible representation of $G,$ such that $\pi|_N$ decomposes as $\bigoplus_i \chi_i.$ For any $gN \in \textrm{Stab } \chi_i,$
we have that $\pi(g)e_i \in \textrm{span } e_i,$
where $e_i$ form the basis corresponding to the decomposition
of $\pi|_N$ into one-dimensional representations
$\bigoplus_i \chi_i.$ That is, $G/N$ acts on the characters composing $\pi$, so if they are all distinct, as in the case of a totally \unram group, it permutes them.

\begin{example}

We show that $S_4$ is totally \unram.
The maximal abelian normal subgroup is given by $\{e, (12)(34), (13)(24), (14)(23)\}$ which is an isomorphic copy of the Klein four group
$V = \mathbb{Z}_2 \times \mathbb{Z}_2.$
The character tables of the representations of $V$ and $S_4$ are given by:
\begin{center}
\begin{tabular}{cc}
\begin{tabular}{c|rrrrr}
&&&&\\ \hline
$\pi_1$&1&1&1&1\\
$\pi_2$&1&-1&1&-1\\
$\pi_3$&1&1&-1&-1\\
$\pi_4$&1&-1&-1&1\\
\end{tabular}
\qquad & \qquad
\begin{tabular}{c|ccccc}
&\{\}&\{2\}&\{2,2\}&\{3\}&\{4\}\\ \hline
$\tau_1$&1&1&1&1&1\\
$\tau_2$&1&-1&1&1&-1\\

$\tau_3$&3&1&-1&0&-1\\
$\tau_4$&3&-1&-1&0&1\\
$\tau_5$&2&0&2&-1&0\\
\end{tabular} \\[1mm]
\phantom{V}$V$ & \phantom{VV}$S_4$\\[2mm]
\end{tabular}
\end{center}
\iffalse
The character table of the representations of $V$ is given by:
\begin{center}
\begin{tabular}{c|rrrrr}
&&&&\\ \hline
$\pi_1$&1&1&1&1\\
$\pi_2$&1&-1&1&-1\\
$\pi_3$&1&1&-1&-1\\
$\pi_4$&1&-1&-1&1\\
\end{tabular}
\end{center}
The character table of $S_4$ is:
\begin{center}
\begin{tabular}{c|ccccc}
&\{\}&\{2\}&\{2,2\}&\{3\}&\{4\}\\ \hline
$\tau_1$&1&1&1&1&1\\
$\tau_2$&1&-1&1&1&-1\\

$\tau_3$&3&1&-1&0&-1\\
$\tau_4$&3&-1&-1&0&1\\
$\tau_5$&2&0&2&-1&0\\
\end{tabular}
\end{center}
\fi
The only irreducible characters of $S_4$ that are nontrivial on $V$ are $\tau_3$ and $\tau_4$. 
From the tables we see $\tau_3|_V=\tau_4|_V=\pi_2+\pi_3+\pi_4$.
Therefore $S_4$ is \unram  over $V$.

Now it remains to see that $S_4/V \cong S_3$ is totally \unram. 
The group $S_3$ has three irreducible representations, two of which are one-dimensional. 
The  two-dimensional representation restricted to the $\mathbb{Z}_3$ subgroup has two distinct characters on the diagonal.
\end{example}

\begin{example}
The dihedral groups $D_{n}=\langle a,b \colon a^n =  b^2 =abab= e\rangle$ are totally \unram. The irreducible two-dimensional representations $\pi$ are given by 
$$
\pi \colon a \mapsto \begin{pmatrix}
\omega & 0\\
0 & \omega\inv
\end{pmatrix},
\ b \mapsto
\begin{pmatrix}
0 & 1\\
1 & 0
\end{pmatrix}
$$ 
where $\omega$ is a primitive $n$-th  root of unity. The restriction of $\pi$ to $\langle a \rangle$ splits into two distinct irreducible representations.
For $n \geq 4$ such a representation is clearly not complete.

\end{example}

\section{Solvable groups and their invariants}\label{sec:solvable}

In this section we prove Theorem \ref{t:solv}.
A main technical ingredient are realizations of 
noncommutative rational functions, i.e., a canonical-type forms for them.

\subsection{Realizations}
%A convenient way of presenting a noncommutative rational function is via realization theory.
Each rational function can be written in the form
\beq \label{realizationformula} r = c^*L^{-1}b \eeq
where $b,c\in \mathbb{F}^n,$ and 
\[L = A_0 + \sum^d_{i=1} A_ix_i\]
for some $A_i \in \M_n(\mathbb{F}).$ This formula is nondegenerate if and only if $L$ admits an invertible matrix evaluation.
For a comprehensive study of noncommutative rational functions we
refer to \cite{Coh06,BR11} or \cite{BGM05,KVV12}.
We will need the realization formula in \eqref{realizationformula} to prove for abelian groups 
(and thus for solvable groups via a later inductive argument) that the 
noncommutative polynomial invariants generate the rational invariants.

\subsection{Proof of Theorem \ref{t:solv}}

We prove that if $G/H$ is abelian and $\frats{x}{d}^{H}$ is finitely generated,
then $\frats{x}{d}^{G}$ is finitely generated.
Note that this suffices for  proving  Theorem \ref{t:solv} 
by a simple
inductive argument.

\begin{lemma}\label{lem:fg}

Let $H$ be a normal subgroup of $G$ such that $G/H = N$ is abelian.
Suppose there are finitely many $q_i \in \frats{x}{d}^{H}$ such that every $r \in \frats{x}{d}^H$ is of the form
\begin{equation}\label{eq:relReal}
r(x) = c^*\left(A_0 + \sum_i A_i q_i(x)\right)^{-1}b,
\end{equation}
where the formula on the right-hand side is nondegenerate.
Then, there exist finitely many $\tilde{q}_{j} \in \frats{x}{d}^G$ 
such that for every $\tilde{r}\in \frats{x}{d}^G$ we have 
\begin{equation}\label{eq:relRealNew}
\tilde{r}(x) = \tilde{c}^*\left(\tilde{A}_0 + \sum_j \tilde{A}_j \tilde{q}_j(x)\right)^{-1}\tilde{b},
\end{equation}
and the formula on the right-hand side is nondegenerate.
\end{lemma}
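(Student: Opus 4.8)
The goal is to "lift" a finite relative realization over $H$ to one over $G$, where $G/H=N$ is abelian. The natural strategy is to exploit the $N$-action on $\frats{x}{d}^H$: since $H$ is normal in $G$, the group $N=G/H$ acts on the skew field $\frats{x}{d}^H$, and $\frats{x}{d}^G=(\frats{x}{d}^H)^N$. So the problem is reduced to understanding invariants of an abelian group acting (by skew field automorphisms) on a skew field that is finitely generated via realizations. The key idea is to decompose an arbitrary $\tilde r\in\frats{x}{d}^G$ through the $N$-orbit of its realization. Given $\tilde r\in\frats{x}{d}^G\subseteq\frats{x}{d}^H$, write $\tilde r(x)=c^*(A_0+\sum_i A_iq_i(x))^{-1}b$ using the $q_i$. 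For each $\sigma\in N$, applying $\sigma$ to this realization gives $\tilde r(x)=\sigma(\tilde r)(x)=c^*(A_0+\sum_i A_i\,\sigma(q_i)(x))^{-1}b$, because $\tilde r$ is $N$-invariant. Thus $\tilde r$ simultaneously has realizations built from the translated generators $\sigma(q_i)$ for all $\sigma\in N$.

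\textbf{Assembling the invariant generators.} Now I would take $\tilde q_j$ to be (essentially) the full collection $\{\sigma(q_i): \sigma\in N,\ i\}$, which is finite, together with possibly finitely many more auxiliary invariants obtained from symmetrizing over $N$ (e.g. power sums or elementary-symmetric-type combinations $\sum_{\sigma}\sigma(q_i)$, and more carefully the "Galois descent" generators needed to reconstruct an invariant realization). The point is to replace the realization $c^*(A_0+\sum A_i q_i)^{-1}b$ by an equivalent one all of whose coefficient-data is $N$-invariant, by a pigeonhole/averaging argument: form the direct sum $\bigoplus_{\sigma\in N}\sigma(L)$ of the translated pencils, which is an $N$-invariant (up to permutation of blocks) pencil in the variables $\sigma(q_i)$, extract $\tilde r$ from one block, and then reorganize this as a single realization in the new invariant generators $\tilde q_j$ with invariant coefficient matrices $\tilde A_k$. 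The nondegeneracy of \eqref{eq:relRealNew} is inherited: the block pencil is invertible at suitable evaluations because the original $L$ is, and restricting to a block preserves a nondegenerate evaluation.

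\textbf{Why the new formula is in terms of invariants only.} The subtle point — and the main obstacle — is ensuring that after this reorganization the \emph{coefficients} $\tilde A_k$ and vectors $\tilde b,\tilde c$ can be taken in $\mathbb F$ (constants) while the \emph{arguments} $\tilde q_j$ are genuinely $N$-invariant rational functions, so that the right-hand side of \eqref{eq:relRealNew} is manifestly an element of $\frats{x}{d}^G$, and conversely every such element arises this way. This requires a linear-algebra descent: the $N$-action permutes the blocks of $\bigoplus_\sigma\sigma(L)$, and one must diagonalize/average this permutation action (using $\operatorname{char}\mathbb F=0$ and that $|N|$ is invertible) to produce an invariant pencil, analogous to how in Lemma \ref{lem:linear} completeness forces linearity; here abelianness of $N$ makes the relevant representation theory (Pontryagin duality on $N^*$) transparent. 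One then needs to check that the entries of the invariant pencil are polynomial — in fact linear — in the chosen invariant generators $\tilde q_j$; this is where one may need to enlarge the generating set by the finitely many symmetric functions of the $\sigma(q_i)$ over $N$. I expect the bookkeeping of this descent — tracking how the $N$-action on the realization data translates into an $N$-action on a finite-dimensional space of coefficients, and averaging it correctly without destroying nondegeneracy — to be the technical heart of the argument; the finiteness of the resulting generating set is then automatic since $N$ is finite and we started with finitely many $q_i$.
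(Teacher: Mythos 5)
Your high-level strategy matches the paper's: reduce to the $N$-action on $\frats{x}{d}^H$, form a block direct sum of translated pencils over $N$, and then descend to an invariant realization. But there is a genuine gap in the descent, and some of your proposed constructions would simply not produce $G$-invariant generators.

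First, you never diagonalize the $N$-action on the generators. The paper's crucial preliminary step is to observe that $V=\operatorname{span}\{n\cdot q_i : n\in N\}$ is a finite-dimensional $N$-module, so by Pontryagin duality one may replace the $q_i$ (WLOG) by a basis $v_i$ of $V$ with $n\cdot v_i=\chi_i(n)v_i$. Once this is done, the ``translated generators'' you work with collapse: $\sigma(q_i)=\chi_i(\sigma)q_i$ is just a scalar multiple of $q_i$, not a new function. Your proposal to take the collection $\{\sigma(q_i):\sigma\in N, i\}$ as the $\tilde q_j$ therefore cannot work — these are \emph{not} $G$-invariant (they are scalar multiples of the non-invariant $q_i$), so they are not eligible as generators of $\frats{x}{d}^G$ at all.

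Second, your fallback — symmetrizing via $\sum_\sigma \sigma(q_i)$ or elementary symmetric combinations — also fails. With $q_i$ an eigenvector of character $\chi_i$, one has $\sum_{\sigma\in N}\sigma(q_i)=\bigl(\sum_\sigma\chi_i(\sigma)\bigr)q_i$, which is $0$ when $\chi_i$ is nontrivial and $|N|q_i$ when $\chi_i$ is trivial. Symmetrization destroys the very functions you need. The actual descent in the paper is multiplicative, not additive: it fixes, for each $\chi$ in the subgroup of $N^*$ generated by the $\chi_i$, a monomial $m_\chi$ in the $q_i$ which is a $\chi$-eigenvector, assembles the block pencil $I\otimes A_0+\sum_i\tilde\chi_i\otimes A_iq_i$ (with $\tilde\chi_i$ a permutation matrix for the regular representation of $N^*$), and then conjugates by the diagonal matrices $M,\hat M$ whose diagonal entries are the $m_\chi$ and $m_{\chi^{-1}}$. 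The resulting nonzero entries $m_{(\chi_i\nu)^{-1}}q_i m_\nu$ are genuinely $G$-invariant because the characters telescope, and these (finitely many) products are the $\tilde q_j$. This twisted-product construction — rather than averaging or projecting to a block — is what you call the ``bookkeeping heart'' and what your proposal leaves unresolved; gesturing at ``Galois descent generators needed to reconstruct an invariant realization'' names the problem but does not supply a construction, and the concrete candidates you do offer would not work.
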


\begin{proof}
Let $V = \textrm{span } \{n \cdot q_i(x) \colon n \in N\}.$ By Pontryagin duality there exists a basis $\{v_i\}_i$ for $V$ such that $n\cdot v_i = \chi_i (n)v_i.$
Without loss of generality, $q_i = v_i.$
For each nontrivial $\chi$ in the subgroup of $N^*$ generated by the $\chi_i,$ pick a
monomial $m_\chi$ in the $q_i$ such that $n\cdot m_\chi = \chi(n) m_{\chi}.$  Without loss of generality, the subgroup generated by the $\chi_i$ is the whole of $N^*.$

The representation $ \chi_i \mapsto \oplus_{n \in N} \chi_i(n)$ is conjugate to the left regular representation of $N^*$. 
Denote $\tilde{\chi}_i = P(\oplus_{n \in N} \chi_i(n))P\inv$ where $\tilde{\chi}_i$ is the permutation matrix that maps $e_\nu$ to $e_{\chi_i\nu}$. Define vectors 
$$
s^*=\bpm 1 & 0 &\cdots &0 \epm P \quad \text{and} \quad t= P\inv
%\bpm 1 \\ 0 \\\vdots \\0 \epm.
\bpm 1 & 0 &\cdots &0 \epm^*.
$$
Index the rows of vectors $s, t$ with elements of $N$. Observe that $s^*t=\sum_{n \in N}s_nt_n=1$.\looseness=-1

Let $\tilde{r}(x)$ be a $G$-invariant rational function. Then it is in particular $H$-invariant, and by assumption it admits a realization as in \eqref{eq:relReal}.
From it one derives a new realization of $\tilde{r}(x)$,
\begin{align*}
\tilde{r}(x) &= 
\sum_{n \in N}s_nt_nc^*\left(A_0 + \sum_i A_i q_i(n \cdot x)\right)^{-1}b
\\ &=
(s \otimes c)^*
\left(I \otimes A_0 + \oplus_{n\in N}\sum_i A_i q_i(n\cdot x)\right)\inv
(t \otimes b)\\	 &
= (s \otimes c)^*
\left(I \otimes A_0 + \oplus_{n\in N}\sum_i A_i \chi_ {i}(n)q_i(x)\right)\inv
(t \otimes b)\\
&= \bpm c \\ 0
		\\
	\vdots
		\\ 
	0\epm^*
	\left(I \otimes A_0 + \sum_i \tilde{\chi}_i \otimes A_i q_i(x)\right)^{-1}
	\bpm b \\ 0
		\\
	\vdots
		\\ 
	0\epm \\
&=: \tilde{c}^*
	\left(I \otimes A_0 + \sum_i \tilde{\chi}_i \otimes A_i q_i(x)\right)^{-1}
	\tilde{b}.
\end{align*}
Let $M$ be a diagonal matrix with  columns indexed by elements of $N^*$ (starting with the identity)
such that the diagonal entries are $m_{\chi}$ for nontrivial $\chi$ and $1$ otherwise. Similarly, let
$\hat{M}$ be diagonal with diagonal entries $m_{\chi^{-1}}$ for nontrivial $\chi$ and $1$ otherwise. 
Observe that $\tilde{c}^*(M \otimes I)=\tilde{c}^*$ and $(\hat{M} \otimes I)\tilde{b}=\tilde{b}$

Then,
\[\begin{split}
	r(x)&= \tilde{c}^*
	\left(I \otimes A_0+ \sum_i \tilde{\chi}_i \otimes A_i q_i(x)\right)^{-1}
	\tilde{b} \\
	&= \tilde{c}^*(MM^{-1} \otimes I)
	\left(A_0^{\oplus |N|} + \sum_i \tilde{\chi}_i \otimes A_i q_i(x)\right)^{-1}(\hat{M}^{-1}\hat{M} \otimes I)
	\tilde{b} \\
	&= \tilde{c}^*
	\left(\hat{M}M \otimes A_0 + \sum_i \hat{M}\tilde{\chi}_iM  q_i(x)  \otimes A_i\right)^{-1}
	\tilde{b} .
\end{split}\]
Note that $\hat{M}{M}$ is invariant. 
Now we need to show that $\hat{M}\tilde{\chi}_iq_i(x) M$ is invariant. 
The nonzero elements of $\hat{M}\tilde{\chi}_iq_i(x) M$ are
$m_{(\chi_i\nu)^{-1}} q_i(x) m_{\nu}$
which are clearly invariant by the choice of $q_i$ and $m_\chi$.
Let $\tilde{q_j}\in \frats{x}{d}^G$ be the nonconstant entries of matrices $\hat{M}{M}$ and $\hat{M}\tilde{\chi}_iq_i(x) M$ (they do not depend on $b,c,A_i$ from \eqref{eq:relReal}). So there are constant matrices $\tilde{A}_j$ such that
$$\hat{M}M \otimes A_0 + \sum_i \hat{M}\tilde{\chi}_iM  q_i(x)  \otimes A_i =
\tilde{A}_0 + \sum_j \tilde{A}_j \tilde{q}_j(x).$$
This concludes the proof since the new form \eqref{eq:relRealNew} is defined wherever $M$ was invertible, and thus non-degenerate.
\end{proof}

\section{The abelian case}\label{sec:abelian}

The next theorem
shows that the invariant fields for abelian groups
are free and can be explicitly described.

\begin{theorem} \label{abelianisomorphism}

Let $G\subset \GL_d(\FF)$ be abelian. Then
$$
\frats{x}{d}^G \cong \frats{u}{|G|(d-1)+1}.
$$
If $G$ is diagonal, that is $G = \oplus \chi_i$ where $\chi_i \in G^*$, 
then any minimal set of generators
for the subgroup of the free group on $d$ generators given by
the words 
$x_{i_1}^{j_1}\ldots x^{j_k}_{i_k}$ with $\chi_{i_1}^{j_1}\ldots \chi^{j_k}_{i_k} = 1$
can serve as the preimage of the $u_j$.

\end{theorem}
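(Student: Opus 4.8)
The plan is to first reduce to the case where $G$ is diagonal, and then to reduce the rational-invariant computation to a computation in the free group. For the first reduction, note that after a linear change of variables (which induces an automorphism of $\frats xd$ sending $G^G$-invariants to invariants) we may assume $G\subseteq \GL_d(\FF)$ is simultaneously diagonalized, since $G$ is a finite abelian subgroup and $\FF$ is algebraically closed of characteristic $0$; thus $G=\bigoplus_i\chi_i$ with $\chi_i\in G^*$ acting by $g\cdot x_i=\chi_i(g)x_i$. The characters $\chi_1,\dots,\chi_d$ generate a subgroup $H\le G^*$, and after replacing $G$ by $G/H^\perp$ (which does not change the action on the $x_i$, hence not the invariant skew field) we may assume $\langle\chi_1,\dots,\chi_d\rangle=G^*$, so $|G|=|H|$.

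Next I would set up the free-group bookkeeping. Let $F_d$ be the free group on $y_1,\dots,y_d$, and let $\psi\colon F_d\to G^*$ be the homomorphism $y_i\mapsto\chi_i$; this is surjective by the previous paragraph, so its kernel $K=\ker\psi$ has index $|G|$ in $F_d$ and, by the Nielsen--Schreier theorem, is free of rank $|G|(d-1)+1$. Fix a free generating set $w_1,\dots,w_{|G|(d-1)+1}$ of $K$; each $w_j$ is a word $x_{i_1}^{j_1}\cdots x_{i_k}^{j_k}$ with $\chi_{i_1}^{j_1}\cdots\chi_{i_k}^{j_k}=1$, so the corresponding monomial $q_j:=x_{i_1}^{j_1}\cdots x_{i_k}^{j_k}\in\fratsm x^\bullet$ is $G$-invariant. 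The heart of the argument is then a two-sided containment: (i) the $q_j$ generate $\frats xd^G$ as a skew field, and (ii) they satisfy no rational relations, i.e.\ the inclusion $\frats{u}{|G|(d-1)+1}\to\frats xd^G$, $u_j\mapsto q_j$, is an isomorphism. For (ii) I would invoke Lewin's theorem \cite[Theorem 1]{Lew74} as in the introduction: the subalgebra of $\fpolysm x$ generated by a set of monomials corresponding to a free subset of $F_d$ is free, and more importantly its universal skew field of fractions embeds in $\frats xd$; since $w_1,\dots,w_{|G|(d-1)+1}$ freely generate the subgroup $K$, the monomials $q_j$ generate a free skew subfield of rank $|G|(d-1)+1$ inside $\frats xd^G$. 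For (i), this is exactly where the realization formula \eqref{realizationformula} enters, and this is the main obstacle: given an arbitrary $G$-invariant $r$, write $r=c^*L^{-1}b$ with $L=A_0+\sum_iA_ix_i$; averaging the realization over $G$ and using that $g\cdot x_i=\chi_i(g)x_i$, one produces, by the same $\tilde\chi$-twisting and conjugation-by-$M$ trick used in the proof of Lemma~\ref{lem:fg}, a new realization of $r$ whose linear pencil has all of its nonconstant matrix entries equal to $G$-invariant monomials in the $x_i$ of the form $m_{\chi^{-1}}x_i m_{\nu}$; since every such monomial lies in the skew subfield generated by the $q_j$ (because its image in $F_d$ lies in $K$, as the subgroup $K$ together with the chosen monomial representatives $m_\chi$ of coset classes spans everything), we conclude $r\in\fratsm{q_1,\dots,q_{|G|(d-1)+1}}$.

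I expect the bulk of the technical work, and the main obstacle, to be step (i): one must be careful that the twisted realization one writes down is \emph{nondegenerate}, i.e.\ admits an invertible matrix evaluation, so that it genuinely represents $r$ in $\frats xd$ and not merely a formal identity. This is handled exactly as at the end of the proof of Lemma~\ref{lem:fg}: the new pencil is obtained from the old one by the invertible substitutions $x_i\mapsto\tilde\chi_i x_i$ and conjugation by the diagonal matrix $M$ (invertible wherever the $m_\chi$ are), so nondegeneracy is preserved on a nonempty set of tuples. Once (i) and (ii) are in hand, the stated isomorphism $\frats xd^G\cong\frats{u}{|G|(d-1)+1}$ follows, and the final sentence of the theorem is simply the observation that the generators $u_j$ may be taken to correspond to any chosen free generating set $w_1,\dots,w_{|G|(d-1)+1}$ of the Nielsen--Schreier subgroup $K\le F_d$, which is what the argument produced.
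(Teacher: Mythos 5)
Your proposal is correct and takes essentially the same route as the paper's proof: diagonalize the action, count generators via Nielsen--Schreier, get freeness from Lewin's theorem, and establish that invariant monomials generate the full invariant skew field via the twisted-realization argument from Lemma~\ref{lem:fg}. The extra reduction replacing $G$ by $G/H^\perp$ is harmless but vacuous here, since a subgroup $G\subset\GL_d(\FF)$ acts faithfully and hence the characters $\chi_i$ already generate all of $G^*$.
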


\begin{proof}
We note that every linear action of an abelian group $G$ can be diagonalized with an appropriate linear change of coordinates. Hence there exist linear polynomials $w_1, \ldots, w_d$ such that
$$g \cdot w_i = \chi_i(g)w_i,$$
where $\chi_i$ belongs to the character group of $G,$ denoted $\hat{G}$, and $w_i$ form an orthonormal basis (in the sense that the coefficients are orthogonal) for the space of all linear polynomials in
$\fpolys{x}{d}.$
By \cite[Theorem 7.4]{CPT-D18}
the elements of $\fpolys{x}{d}^G$ are spanned
by monomials of the form $w_{i_1}\ldots w_{i_k}$
such that $\chi_{i_1}\ldots \chi_{i_k} = 1$.
By embedding $\fpolys{x}{d}$ into the group algebra
of the free group on $d$ generators by mapping the $w_i$ to the said group generators, one obtains that 
the invariants in the free group algebra (of noncommutative Laurent polynomials)
are generated by $|G|(d-1)+1$ elements via the Nielsen-Schreier theorem \cite[Section I.3]{LS01} as in \cite[Theorem 7.5]{CPT-D18}.
Concretely, we have a surjective homomorphism from the free
group with $d$ generators to $\hat{G}$
which is itself non-canonically isomorphic to $G.$
The kernel of this homomorphism is a subgroup of the free group with $d$ generators, which must be free and have $|G|(d-1)+1$  generators via the Nielsen-Schreier formula.
The generating elements can satisfy no rational relations by
\cite{Lew74} (see also \cite{Lin00}), that is, their rational closure is a free skew field on these  generators.

Therefore it suffices to show that 
polynomial invariants
$\fpolys{x}{d}^G$ generate
the skew field of rational invariants $\frats{x}{d}^G$.
This follows a similar line of reasoning as used in the proof of Lemma \ref{lem:fg}.
Let $r \in \frats{x}{d}^G.$
By the realization theory,
one can write any element of the free skew field as
$r = c^*(A_0 + \sum_i A_i w_i)^{-1}b.$
Now $g\cdot r = r$, so as in the proof of Lemma \ref{lem:fg}, 
we have 
\[ r = c^*\left(A_0 + \sum_i A_i \chi_i(g)w_i\right)^{-1}b
 = \bpm c \\ 0
\\
\vdots
\\ 
0\epm
^*\left(I \otimes A_0 + \sum_i \tilde{\chi}_i \otimes A_i w_i\right)^{-1}
\bpm b \\ 0
\\
\vdots
\\ 
0\epm,
\]
 where $\tilde{\chi}_i$ is the permutation matrix that maps $e_\nu$ to $e_{\chi_i\nu}$.

Fix polynomials $v_\chi$ such that $g \cdot  v_\chi = 
\chi(g)v_\chi$ and $v_\tau = 1$, where $\tau$ is the trivial representation. 
Let $V$ be a diagonal matrix whose diagonal entries are the $v_{\chi}$.
Similarly, let $\hat{V}$ be diagonal with diagonal entries $v_{\chi^{-1}}$.
Now
\begin{align*}
 r &= \bpm c \\ 0
\\
\vdots
\\ 
0\epm^*\left(I\otimes A_0 + \sum_i \tilde{\chi}_i\otimes A_i w_i\right)^{-1}
\bpm b \\ 0
\\
\vdots
\\ 
0\epm \\
 &= \bpm c \\ 0
\\
\vdots
\\ 
0\epm^*(VV^{-1}\otimes I)\left(I\otimes A_0 + \sum_i \tilde{\chi}_i\otimes A_i w_i\right)^{-1}(\hat{V}^{-1}\hat{V}\otimes I)
\bpm b \\ 0
\\
\vdots
\\ 
0\epm \\
&= \bpm c \\ 0
\\
\vdots
\\ 
0\epm^*\left(\hat{V}V\otimes A_0+ \sum_i (\hat {V}\tilde{\chi}_iw_i V)\otimes A_i\right)^{-1}
\bpm b \\ 0
\\
\vdots
\\ 
0\epm.
\end{align*}

As in the proof of Lemma \ref{lem:fg},
 $\hat{V}V$ and  $\hat{V} \tilde{\chi}_iw_i V$
are invariant under the action of $G$.
 We get
$(\tilde{\chi}_i)_{\eta, \nu} = 1$ if $\eta = \chi_i \nu$
and $0$ otherwise.
Now, $(\hat{V} \tilde{\chi}_iw_i V)_{\chi_i\nu, \nu} = v_{\chi_i^{-1} \nu^{-1}}w_iv_{\nu}$ is clearly invariant, so we are done.
\end{proof}

\begin{corollary}\label{cor: abelbasis}
	Let $G$ be an abelian group acting on $\mathbb{F}^d$ via a complete representation $\pi=\pi_B\oplus \pi_J,$
	where $\pi_B$ acts on $\mathbb{F}^{G^*\setminus \{\tau\}}$ and $\tau$ denotes the trivial representation. 
	Let $b_\chi$ and $j_i$ diagonalize $\pi_B$ and $\pi_J$, respectively.
		Then
		$$
	\frats{x}{d}^G \cong \frats{u}{|G|(d-1)+1},
		$$
		and
	 preimages of the $u_j$ are of the form 
		$b_\chi b_\eta b_{(\chi \eta)^{-1}}, b_\chi j_i b_{(\chi\eta_i)\inv}$, where $b_\tau=1$.

\end{corollary}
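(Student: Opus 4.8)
The plan is to reduce this to Theorem \ref{abelianisomorphism} by exhibiting an explicit set of generators and then re-deriving the count and the freeness from the structure of a complete representation. Concretely, the representation $\pi=\pi_B\oplus\pi_J$ is already diagonal, with $\pi_B$ diagonalized by the family $\{b_\chi\}_{\chi\in G^*\setminus\{\tau\}}$ and $\pi_J$ diagonalized by $\{j_i\}$; here $g\cdot b_\chi=\chi(g)b_\chi$ and $g\cdot j_i=\mu_i(g)j_i$ for some characters $\mu_i\in G^*$. By Theorem \ref{abelianisomorphism} we already know $\frats{x}{d}^G\cong\frats{u}{|G|(d-1)+1}$ and that the invariants are freely generated by (the images of) a free basis of the subgroup of the free group on the letters $\{b_\chi,j_i\}$ consisting of words whose total character is trivial. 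So the only thing left to prove is that the specific words $b_\chi b_\eta b_{(\chi\eta)^{-1}}$ and $b_\chi j_i b_{(\chi\eta_i)^{-1}}$ form such a free basis — equivalently, generate the invariant subgroup and are Nielsen-reduced (or form a Schreier transversal basis).

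The first step is to make the Schreier-transversal structure explicit. Let $F$ be the free group on the $|G|-1+(\dim\pi_J)$ letters $\{b_\chi\}\cup\{j_i\}$, and let $\psi\colon F\to G^*$ be the surjection sending $b_\chi\mapsto\chi$ and $j_i\mapsto\mu_i$; the invariant subgroup is $\ker\psi$. Since $\pi_B$ contains every nontrivial character of $G$ exactly once (this is precisely condition (1) in the definition of a complete representation for an abelian $G$), the set $T=\{1\}\cup\{b_\chi:\chi\neq\tau\}$ is a Schreier transversal for $\ker\psi$ in $F$: it is prefix-closed (all elements have length $\le1$) and maps bijectively onto $G^*$ under $\psi$. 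By the Reidemeister–Schreier / Nielsen–Schreier construction, a free basis of $\ker\psi$ is given by the nontrivial elements $tx\,\overline{tx}^{-1}$, where $t$ ranges over $T$, $x$ ranges over the generators $\{b_\chi\}\cup\{j_i\}$, and $\overline{w}\in T$ is the transversal representative of the coset $\ker\psi\cdot w$, i.e. $\overline{w}=b_{\psi(w)}$ if $\psi(w)\neq\tau$ and $\overline{w}=1$ otherwise.

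The second step is just to compute these Schreier generators and match them to the claimed list. For $t=b_\chi$ (or $t=1$, handled by setting $b_\tau=1$) and $x=b_\eta$: the coset of $b_\chi b_\eta$ is represented by $b_{\chi\eta}$ when $\chi\eta\neq\tau$, so the generator is $b_\chi b_\eta b_{\chi\eta}^{-1}=b_\chi b_\eta b_{(\chi\eta)^{-1}}$ after identifying $b_{(\chi\eta)^{-1}}=b_{\chi\eta}^{-1}$ in the passage from group algebra back to the skew field (inverses of the diagonalizing polynomials are legitimate rational functions, and $b_{\chi\eta}$ and $b_{(\chi\eta)^{-1}}$ transform by inverse characters, so their product with the rest is invariant — this is exactly the bookkeeping already done via the matrices $V,\hat V$ in the proof of Theorem \ref{abelianisomorphism}). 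Similarly, for $t=b_\chi$ and $x=j_i$ (with $\psi(j_i)=\eta_i:=\mu_i$) one gets $b_\chi j_i b_{(\chi\eta_i)^{-1}}$. Trivial Schreier generators (those equal to $1$, occurring when the product already lands in the transversal) are discarded. Counting the surviving ones gives $|T|\cdot(\#\text{generators}) - (|T|-1)=|G|\cdot(|G|-1+\dim\pi_J)-(|G|-1)$; since $d=(|G|-1)+\dim\pi_J$ this equals $|G|(d-1)+1$, consistent with Theorem \ref{abelianisomorphism}.

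The third step invokes Lewin's theorem \cite{Lew74} exactly as in Theorem \ref{abelianisomorphism}: a free basis of a free subgroup of $F$, viewed inside the free skew field via $b_\chi,j_i\mapsto$ the diagonalizing linear polynomials (embedding $\fpolys{x}{d}$ into the free group algebra), generates a free skew subfield of the stated rank, and by Theorem \ref{abelianisomorphism} this subfield is all of $\frats{x}{d}^G$. Hence the $b_\chi b_\eta b_{(\chi\eta)^{-1}}$ and $b_\chi j_i b_{(\chi\eta_i)^{-1}}$ are valid preimages of the $u_j$. The main obstacle is purely organizational rather than deep: one must be careful that $T$ is genuinely a Schreier transversal, which is where condition (1) of completeness (each nontrivial character appearing with multiplicity exactly one in $\pi_B$) is used in an essential way — without multiplicity one, the letters $b_\chi$ would not be indexed bijectively by $G^*\setminus\{\tau\}$ and $T$ would fail to be a transversal. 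Everything else is the same Nielsen–Schreier plus Lewin argument already deployed for Theorem \ref{abelianisomorphism}.
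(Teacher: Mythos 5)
Your overall strategy (explicit Schreier transversal, Reidemeister--Schreier generators, then Lewin for freeness) is a genuinely different and more hands-on route than the paper's, which instead reuses the $V,\hat V$ matrix bookkeeping from the proof of Theorem~\ref{abelianisomorphism} to exhibit generators, counts them, and then cites \cite[Corollary~5.8.14]{Coh95}. Your approach has the advantage of making the subgroup-theorem mechanism visible and tying it concretely to condition~(1) of completeness, and the generator count you derive, $|T|\cdot d - (|T|-1) = |G|(d-1)+1$, is correct.

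However, there is a genuine gap in Step~2. The Reidemeister--Schreier construction produces generators $b_\chi b_\eta\, b_{\chi\eta}^{-1}$, where $b_{\chi\eta}^{-1}$ is the \emph{formal inverse} of the letter $b_{\chi\eta}$. You then assert that this ``equals'' $b_\chi b_\eta\, b_{(\chi\eta)^{-1}}$ after ``identifying $b_{(\chi\eta)^{-1}} = b_{\chi\eta}^{-1}$.'' That identity is false: $b_{\chi\eta}^{-1}$ is the rational inverse of a linear polynomial, while $b_{(\chi\eta)^{-1}}$ is another linear polynomial; they are distinct elements of $\frats{x}{d}$. The fact that both transform by the character $(\chi\eta)^{-1}$ only shows that both words are $G$-invariant, not that they coincide. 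For instance, with $G=S_2$, $b_- = x-y$, $j_1=x+y$, the Schreier generator $b_- j_1 b_-^{-1} = (x-y)(x+y)(x-y)^{-1}$ is a different rational function from the corollary's $b_- j_1 b_- = (x-y)(x+y)(x-y)$.

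The fix is not hard but must be stated: pass from your Schreier basis to the corollary's list by Nielsen transformations. Concretely,
\[
b_\chi b_\eta\, b_{(\chi\eta)^{-1}} \;=\; \bigl(b_\chi b_\eta\, b_{\chi\eta}^{-1}\bigr)\cdot \bigl(b_{\chi\eta}\, b_{(\chi\eta)^{-1}}\bigr),
\]
and the factor $b_{\chi\eta}\, b_{(\chi\eta)^{-1}}$ is itself one of your Schreier generators (arising from $t=b_{\chi\eta}$, $x=b_{(\chi\eta)^{-1}}$, where $\overline{tx}=1$). Right-multiplying one free generator by another is an elementary Nielsen transformation, so it carries a free basis to a free basis, and the same works for $b_\chi j_i\, b_{(\chi\mu_i)^{-1}}$. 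With this replacement the rest of your argument (Lewin to get the free skew subfield, equality with $\frats{x}{d}^G$ from Theorem~\ref{abelianisomorphism}) goes through.
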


\begin{proof}
For $v_\chi$ from the proof of Theorem \ref{abelianisomorphism} we take $b_\chi$, while for $w_i$ we take $b_\chi$ and $j_i$ for $i=1,\dots, d-|G|+1$, where $d-|G|+1$ is the dimension of $\pi_J$. Clearly
$$
\left\{
b_\eta b_{\eta\inv},\ b_\chi b_\eta b_{(\chi\eta)\inv},\ b_\chi j_i b_{(\chi\eta_i)\inv}
\colon \chi\in G^*,\ \eta \in G^*\setminus\{\tau\},\ 1\le i\le t
\right\}\setminus\{1\}$$
generate $\frats{x}{d}^G$. Since $b_\eta b_{\eta\inv}=b_\tau b_\eta b_{\eta\inv}=b_\eta b_{\eta\inv} b_\tau$, there are
$$(|G|-1)+(|G|-1)(|G|-2)+ |G|(d-|G|+1)=|G|(d-1)+1$$
generators. By \cite[Corollary 5.8.14]{Coh95} they are free generators of the free skew field of invariants.
\end{proof}

\begin{example}
Let $\omega$ be a third root of unity and $c$ a generator of $\mathbb{Z}_3$. 
Define a representation of $\mathbb Z_3$ on $\mathbb{F}^2$ by $cx=\omega x$ and $cy=\omega^2y$. Then we have 
$$
\fratsm{x,y}^{\mathbb{Z}_3}  =\fratsm{x^3,xy,yx,y^3}.
$$
\end{example}

\section{\Unram groups and their invariants}\label{sec:unram}

The following is our main structure theorem for invariant fields of %totally unramified 
solvable groups.

	\begin{theorem}\label{thm:unram2}

Let $G\subset \GL_d(\FF)$ be a 
finite group acting on $\FF^d$ via a complete representation.
Then
$$\frats{x}{d}^G \cong \frats{u}{|G|(d-1)+1}.$$

	\end{theorem}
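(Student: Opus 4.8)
The plan is to argue by induction on $|G|$, using the definition of a complete representation as the engine of the induction. Let $\pi=\pi_B\oplus\pi_J$ be the given complete representation of $G$, with $N\subseteq G$ the nontrivial normal abelian subgroup appearing in the definition. The base case $G$ trivial is vacuous. For the inductive step, I would first treat $\frats{x}{d}$ as generated by linear coordinates $w_1,\dots,w_d$ that simultaneously diagonalize $\pi|_N$: by item (1) of the definition of completeness, the coordinates coming from $\pi_B$ realize exactly the nontrivial characters of $N$ each with multiplicity one, while the coordinates from $\pi_J$ carry the trivial character. The first goal is to pass from $\frats{x}{d}^G$ to a skew field of the form $\frats{y}{e}^{G/N}$ on which $G/N$ acts via the representation described in item (2) of the definition, namely
\[
\pi_{N^\tau}\oplus(\pi_B\otimes\pi\oplus\pi\otimes\pi_B)_{N^\tau}\oplus(\pi_B\otimes\pi\otimes\pi_B)_{N^\tau}.
\]
Concretely, first apply the abelian case (Corollary \ref{cor: abelbasis}) to the subgroup $N$: this identifies $\frats{x}{d}^N$ with a free skew field whose free generators are the invariant building blocks $b_\chi b_\eta b_{(\chi\eta)\inv}$ and $b_\chi j_i b_{(\chi\eta_i)\inv}$. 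The residual group $G/N$ acts on $\frats{x}{d}^N=\frats{x}{d}^G{}^{G/N}$, and one checks that this action, under the free identification from Corollary \ref{cor: abelbasis}, is \emph{linear} in the new generators and is given precisely by the representation in item (2); this is exactly the content that the forthcoming Lemma \ref{lem:linear} (referenced in the text) is set up to deliver, and it is the reason the technical definition of completeness is phrased the way it is.

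Granting that, the inductive hypothesis applies to $G/N$ acting via that complete representation (completeness of the induced representation is built into clause (2) of the definition), yielding
\[
\frats{x}{d}^G=\bigl(\frats{x}{d}^N\bigr)^{G/N}\cong\frats{u}{|G/N|(e-1)+1},
\]
where $e=|N|(d-1)+1$ is the number of free generators produced by Corollary \ref{cor: abelbasis} for the $N$-action. The final step is the arithmetic: substituting $e=|N|(d-1)+1$ and $|G/N|=|G|/|N|$ into $|G/N|(e-1)+1$ gives
\[
\frac{|G|}{|N|}\bigl(|N|(d-1)+1-1\bigr)+1=|G|(d-1)+1,
\]
so the count comes out exactly as claimed, independently of $|N|$. (This telescoping is the reason the formula $|G|(d-1)+1$ is stable under the group extension.)

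\textbf{Main obstacle.} The genuinely hard step is establishing that the induced action of $G/N$ on the free skew field $\frats{x}{d}^N$ is linear, i.e.\ that it permutes-and-scales the free generators $b_\chi b_\eta b_{(\chi\eta)\inv}$ and $b_\chi j_i b_{(\chi\eta_i)\inv}$ rather than mixing them in some complicated rational way. This is where completeness is indispensable: clause (1) guarantees the $b_\chi$ are unambiguously labeled by nontrivial characters of $N$ (no multiplicities to disentangle), so that $G$ permutes the $b_\chi$ up to scalars via the action of $G/N$ on $N^*$ described in Section \ref{ssec:unram}; and clause (2) is exactly the bookkeeping identity asserting that the three families of generators $b_\chi b_\eta b_{(\chi\eta)\inv}$, $b_\chi j_i b_{(\chi\eta_i)\inv}$, and their companions assemble into the stated tensor-type representation of $G/N$. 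One must be careful that the generators $b_\chi$ are only determined up to scalars, and that products like $b_\chi b_\eta b_{(\chi\eta)\inv}$ must be normalized consistently (as in the choice of $m_\chi$ in Lemma \ref{lem:fg}) so that the $G/N$-action is genuinely linear and not merely projectively linear; handling these scalar cocycles carefully, together with invoking Lemma \ref{lem:linear}, is the crux. Everything else — the abelian base case, the free-generator count, Lewin's theorem ruling out hidden rational relations — is already in hand from the earlier sections.
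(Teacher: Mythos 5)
Your proposal is, structurally, the paper's proof: apply Corollary \ref{cor: abelbasis} to the normal abelian subgroup $N$ to get $\frats{x}{d}^N\cong\frats{u}{|N|(d-1)+1}$, invoke Lemma \ref{lem:linear} to see that $G/N$ acts linearly on the resulting free generators, identify that linear action with the representation appearing in clause (2) of the definition of completeness, close by induction, and verify the telescoping count $|G/N|\bigl(|N|(d-1)+1-1\bigr)+1=|G|(d-1)+1$. Two small expository slips, neither affecting the logic: first, the definition does \emph{not} say that the $\pi_J$-coordinates carry the trivial character of $N$ — $\pi_J|_N$ may well contain nontrivial characters with extra multiplicity, which is precisely why the generators $b_\chi j_i b_{(\chi\eta_i)^{-1}}$ carry the index $\eta_i$; second, clause (2) of the definition is not itself the ``bookkeeping identity'' that the generators form the tensor-type representation — that identification is a separate fact read off from the explicit form of the generators in Corollary \ref{cor: abelbasis} — rather, clause (2) asserts that this representation of $G/N$ is again complete, which is exactly what keeps the induction running. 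Your concern about scalar ambiguity is legitimate but resolves itself: $g\cdot b_\chi=\lambda(g,\chi)b_{\chi^g}$ gives a monomial (permute-and-scale) action on the generators $b_\chi b_\eta b_{(\chi\eta)^{-1}}$ and $b_\chi j_i b_{(\chi\eta_i)^{-1}}$, which is genuinely linear, and $N$ acts trivially on them because the product of the three characters is $\tau$ by construction, so the action descends to an honest linear representation of $G/N$.
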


A key step in the proof of Theorem \ref{thm:unram2} will be the following lemma.

\begin{lemma}\label{lem:linear}
Let $\pi=\pi_B\oplus\pi_J$ be a complete representation of $G$ on $\FF^d$ and let $N$  be a normal abelian subgroup as in the definition of complete representation in Section \ref{ssec:complete_representations}. 
Then $G/N$ acts linearly on 
the free generators of $\frats{x}{d}^N$ constructed in Corollary \ref{cor: abelbasis} (when applied to the abelian group $N$).
\end{lemma}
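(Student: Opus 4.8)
The plan is to make the $G/N$-action on the generators of Corollary~\ref{cor: abelbasis} completely explicit and then read off linearity. The action is well posed: $G$ acts on $\frats{x}{d}$ by $\FF$-algebra automorphisms, this action preserves $\frats{x}{d}^N$ since $N$ is normal, and it factors through $G/N$ since $N$ acts trivially on its own invariants. So it suffices to show that the $\FF$-span $\mathcal W$ of the free generators produced by Corollary~\ref{cor: abelbasis}, applied to $N$ with $\pi|_N=\pi_B|_N\oplus\pi_J|_N$, is $G/N$-invariant; the induced action on the finite-dimensional space $\mathcal W$ is then automatically a representation of $G/N$. Recall those generators: writing $b_\chi$ ($\chi\in N^*\setminus\{\tau\}$) for the linear forms diagonalizing $\pi_B$, $j_1,\dots,j_t$ ($t=\dim\pi_J$) for the ones diagonalizing $\pi_J$, with respective $N$-characters $\eta_1,\dots,\eta_t$, and $b_\tau=1$, they are the nontrivial elements among $b_\chi b_\eta b_{(\chi\eta)^{-1}}$ and $b_\chi j_i b_{(\chi\eta_i)^{-1}}$.

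The crucial input is how $g\in G$ transforms these linear forms. Conjugation by $g$ is an automorphism of $N$, so it induces an automorphism $\chi\mapsto\chi^g$ of $N^*$ satisfying $(\chi\eta)^g=\chi^g\eta^g$ and $(\chi^{-1})^g=(\chi^g)^{-1}$. Since $\pi=\pi_B\oplus\pi_J$ is a decomposition into $G$-subrepresentations, $g$ carries the $\chi$-isotypic component of $\pi_B|_N$ onto the $\chi^g$-isotypic component, and likewise for $\pi_J|_N$. By completeness condition~(1) of Section~\ref{ssec:complete_representations} each nontrivial $N$-isotypic component of $\pi_B$ is one-dimensional, so $g\cdot b_\chi=\lambda_{g,\chi}\,b_{\chi^g}$ for some $\lambda_{g,\chi}\in\FF^\times$, with $g\cdot b_\tau=b_\tau$; on the $\pi_J$ side, $g\cdot j_i$ lies in the $\eta_i^g$-isotypic component of $\pi_J|_N$, so $g\cdot j_i=\sum_{i'\colon\eta_{i'}=\eta_i^g}\mu_{g,i,i'}\,j_{i'}$.

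Substituting, one computes $g\cdot\big(b_\chi b_\eta b_{(\chi\eta)^{-1}}\big)=\lambda_{g,\chi}\lambda_{g,\eta}\lambda_{g,(\chi\eta)^{-1}}\,b_{\chi^g}b_{\eta^g}b_{(\chi^g\eta^g)^{-1}}$ and $g\cdot\big(b_\chi j_i b_{(\chi\eta_i)^{-1}}\big)=\lambda_{g,\chi}\lambda_{g,(\chi\eta_i)^{-1}}\sum_{i'\colon\eta_{i'}=\eta_i^g}\mu_{g,i,i'}\,b_{\chi^g}j_{i'}b_{(\chi^g\eta_{i'})^{-1}}$, where the identity $\eta_{i'}=\eta_i^g$ is used to rewrite the third factor. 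Both right-hand sides are $\FF$-linear combinations of generators of the same shape, and since $\pi_B$ and $\pi_J$ are $G$-stable the two shapes are never mixed. Hence $\mathcal W$ is $G/N$-invariant and the action of $G/N$ on $\mathcal W$ is $\FF$-linear, with the scalars $\lambda_{g,\cdot}$ and coefficients $\mu_{g,\cdot,\cdot}$ giving an explicit matrix form; this gives the lemma.

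I do not expect a serious obstacle; the argument is essentially bookkeeping resting on completeness condition~(1). The one point that needs care is tracking, in the displays above, the character attached to each of the three linear factors and checking they transform compatibly — this is precisely where it matters that $\chi\mapsto\chi^g$ is a group automorphism and that all $j_{i'}$ occurring in $g\cdot j_i$ share the same character $\eta_i^g$ — together with verifying that the degenerate instances $\chi=\tau$ (so $b_\chi=1$) and $\eta_i=\tau$ keep the image inside, rather than outside, the generating set.
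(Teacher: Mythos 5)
Your proposal is correct and follows essentially the same approach as the paper's proof. The paper's argument consists exactly of establishing the two transformation laws you isolate — that $g\cdot b_\chi$ is a scalar multiple of $b_{\chi^g}$ (forced by multiplicity one on the nontrivial $N$-isotypic components of $\pi_B$), and that $g$ maps the $\chi$-isotypic part of $\pi_J|_N$ into the $\chi^g$-isotypic part — and then stops, leaving implicit the bookkeeping computation on the products $b_\chi b_\eta b_{(\chi\eta)^{-1}}$ and $b_\chi j_i b_{(\chi\eta_i)^{-1}}$ that you carry out explicitly; your extra detail, including the observation that $\chi\mapsto\chi^g$ is a group automorphism so the index $(\chi\eta)^{-1}$ transforms to $(\chi^g\eta^g)^{-1}$, is precisely what the terse original leaves to the reader.
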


\begin{proof}
Let $b_\chi$ and $j_k$  diagonalize $\pi_B|_N$ and $\pi_J|_N$, respectively.
Here $b_\chi$ are indexed by $N^*\backslash\{\tau\}$.
Then $ng\cdot b_\chi=\chi(g\inv n g)(g\cdot b_\chi)=\chi^g(n)(g\cdot b_\chi)$, so 
$g\cdot b_\chi$ is a scalar multiple of $b_{\chi^g}$.

Denote $V_\chi=\textrm{span }\{ j_i \colon n\cdot j_i=\chi(n)j_i\}$.
Since $ng\cdot j_i=\chi(g\inv n g)(g\cdot j_i)= \chi^g(n)(g\cdot j_i)$, $v\in V_\chi$ implies $g\cdot v \in V_{\chi^g}$.
\end{proof}

	\begin{proof}[Proof of Theorem \ref{thm:unram2}]
Let $\pi$ be a complete representation of $G$ and $N$ a nontrivial abelian normal subgroup corresponding to it. Then $G/N$ acts linearly on
$$\frats{x}{d}^N \cong \frats{u}{|N|(d-1)+1}$$
by Lemma \ref{lem:linear}; furthermore, by the description of the generators $u_j$ in Corollary \ref{cor: abelbasis}, this action is precisely the representation
\[\pi_{N^\tau}
\oplus (\pi_{B}\otimes \pi \oplus \pi \otimes \pi_B)_{N^\tau}
\oplus (\pi_{B}\otimes \pi \otimes \pi_B)_{N^\tau}.\]
Since it is a complete representation of $G/N$ by assumption,  induction implies
\[
\frats{x}{d}^G \cong \frats{u}{|N|(d-1)+1}^{G/N}\cong\frats{\tilde{u}}{|G|(d-1)+1}.\qedhere
\]
\end{proof}

\begin{proof}[Proof of Theorem \ref{thm:unram}]
We prove that the left regular representation of a totally \unram group is complete. 
Let $G$ be \unram over $N$. 
By Clifford's theorem  \cite[Theorem 6.2]{Isa76}, 
 irreducible representations of $N$ partition into orbits and each orbit is represented by an irreducible representation of $G$. 
Take the nontrivial representatives and define $\pi_B$ as their sum.
The left regular representation of $G/N$ is then contained in $\pi_{N^\tau}$ and we are done by recursion.
\end{proof}

\begin{example}
Define a representation of $S_3$ on $\mathbb{F}^3$ via $\sigma x_i=x_{\sigma(i)}$. 
The representation of the normal subgroup $N$ generated by $(1\ 2\ 3)$ is diagonalized in the basis
$v_1 =x_1+x_2+x_3,\ v_2=x_1+\omega x_2+ \omega^2 x_3,\ v_3=x_1+\omega^2 x_2+\omega x_3$, where $\omega$ is the third root of unity.
By Corollary \ref{cor: abelbasis}, 
we obtain the invariant skew field $\fratsm{x_1,x_2,x_3}^N=\fratsm{v_1,v_2v_3,v_3v_2,v_2v_1v_3,v_3v_1v_2,v_2^3,v_3^3}=\frats{z}{7}$. 

The action of $G/N\cong \mathbb Z_2$ on $\frats{z}{7}$  is given by the action of $(2\ 3)$ {(or any other transposition)} on  the initial variables. We get a representation given by
$$
z_1 \mapsto z_1,
z_2 \mapsto z_3,
z_3 \mapsto z_2,
z_4 \mapsto z_5,
z_5 \mapsto z_4,
z_6 \mapsto z_7,
z_7 \mapsto z_6,
$$
which is diagonalized by 
$$
w_1=z_1,
w_2=z_2+z_3,
w_3=z_2-z_3,
w_4=z_4+z_5,
w_5=z_4-z_5,
w_6=z_6+z_7,
w_7=z_6-z_7.
$$
Finally, applying Corollary \ref{cor: abelbasis} again, the obtained free generators of $\fratsm{x_1,x_2,x_3}^{S_3}$  are
$$
w_1,\ w_2,\  w_4,\  w_6,\ 
 w_3^2,\  w_3w_5,\  w_3w_7,\  w_5w_3,\  w_7w_3,\ 
w_3w_1w_3,\  w_3w_2w_3,\  w_3w_4w_3,\  w_3w_6w_3.
$$
\end{example}

\begin{example}\label{exa:unranoncom}

	Even though the standard two-dimensional representation of 
	$D_4= \mathbb{Z}_4 \rtimes \mathbb{Z}_2$ given by  $a\cdot x = i x,\ a\cdot y = -iy,\ b\cdot x =y$ and $b \cdot y = x$
	is not complete, we can still compute its invariants. 
	The invariants of $N=\langle a\rangle\cong\mathbb Z_4$
	are freely generated by
	$
	z_1=xy,\ z_2=yx,\ z_3=x^2y^2,\ z_4=y^2x^2
	$ 
	and 
	$ 
	z'_5=x^4. 
	$
	Then we replace $z'_5$ by $z_5=z'_5z_4\inv =x^2y^{-2}$.
	The action of $D_4/N\cong\mathbb{Z}_2$ on these generators is 
	$$
	z_1 \mapsto z_2, \ z_2 \mapsto z_1,\ z_3 \mapsto z_4,\ z_4 \mapsto z_3,\ z_5 \mapsto z_5\inv.
	$$
	Observe that this action is linearized and diagonalized with respect to
	$$
	w_1=z_1+z_2,\ w_2=z_1-z_2,\ w_3 =z_3+z_4,\ w_4=z_3-z_4,\ w_5=(1+z_5)(1-z_5)^{-1}.
	$$
		Finally we get nine free generators of the rational invariants of $D_4$:
	$$
	w_1,\  w_2^2,  \ w_2w_1w_2, \ w_2w_3w_2, \ w_2w_4w, \ w_2w_5,\ w_3,\ w_4w_2, \ w_5w_2.
	$$

\end{example}

\begin{example}
	The smallest example of a not totally \unram group is $\SL_2(\mathbb{F}_3)$. 
	It has only one nontrivial normal abelian subgroup $N\cong\mathbb{Z}_2$;  it is generated by $\diag(2,2)$. 
	Every irreducible representation restricted to $N$ is trivial or contains two copies of the sign representation.

	Let us describe problems arising in the computation of a generating set for the skew field of invariants. 
	We start with a two-dimensional irreducible representation of $\SL_2(\mathbb F_3)$ on $\mathbb F^2$, for instance defined by
	$$
	\begin{pmatrix}
	1 & 1 \\
	0 & 1
	\end{pmatrix}
	\mapsto
	\begin{pmatrix}
	0 & \omega^2 \\
	-\omega & -1
	\end{pmatrix},\quad
	\begin{pmatrix}
	0 & 1 \\
 	2	& 0
	\end{pmatrix}
	\mapsto
	\begin{pmatrix}
	0 & -\omega\\
	\omega^2 & 0
	\end{pmatrix}.
	$$
  	The generator of $N$ is mapped to $\diag(-1,-1)$.
	Hence the free generators of $N$-invariants are $x^2,\ xy$ and $yx$.

	The group $G/N$ has one abelian normal subgroup $\tilde N/N$; it is isomorphic to $\mathbb{Z}_2 \times \mathbb{Z}_2$. Representatives of its generators are mapped to 
	$$
	\begin{pmatrix}
	0 & \omega\\
	-\omega^2 & 0
	\end{pmatrix}
	\ \text{ and } \
	\begin{pmatrix}
	-\omega & -1\\
	\omega & \omega
	\end{pmatrix}.
	$$
	The action of these two on $N$-invariants is given by
	\begin{equation}\label{eq:act1}
	x^2 \mapsto \omega y^2=\omega(yx)(x^2)^{-1}(xy), \ xy \mapsto -yx, \ yx \mapsto -xy,
	\end{equation}
	and 
	\begin{equation}\label{eq:act2}
	x^2 \mapsto \omega (x+y)^2, \ xy \mapsto -\omega(x+y)(\omega y -x) , \ yx \mapsto -\omega(\omega y -x)(x+y).
	\end{equation}
	Now the problem is to find a set of free generators of $N$-invariants that simultaneously linearizes and diagonalizes both mappings as we have done in Example \ref{exa:unranoncom}. It is straightforward to linearize
	\eqref{eq:act1} by using a linear fractional transformation 
	in
	$xy^{-1}=x^2(yx)^{-1}$ (cf.~Example \ref{exa:unranoncom}), but then
	the action \eqref{eq:act2} becomes unwieldy. We have been unable to determine if $\mathbb{F}\plangle x,y\prangle^{\SL_2(\mathbb F_3)}$ is free (on 25 generators).
	\end{example}

\section{Positivity of invariant rational functions}\label{sec:pos}

In this section we investigate positive invariant noncommutative rational functions and prove an 
invariant rational Positivstellensatz in Theorem \ref{thm:extendrealization} for solvable groups $G$.
A finer structure of constraint positivity is proved in
Subsection \ref{ssec:qm}.
Positivity certificates for invariants
are ubiquitous in real algebraic geometry literature, see e.g.~\cite{PS85,CKS,Riener,Broecker}.
Throughout this section let $\mathbb{F}=\mathbb{C}$ be the field of complex numbers. We endow $\fratc{x}{d}$ with 
the natural involution fixing the $x_j$ and extending the complex conjugation on $\mathbb{C}$.

\begin{lemma}\label{lem:extendrealization}
Let $G \subset \ort$ be a finite solvable group, $H$ its normal subgroup, and assume that
$N= G/H$ is abelian. 
There exists 
an invertible matrix $R_N \in \M_{|N|}(\fratc{x}{d}^H)$ such that for every $Q_H \in \M_n(\fratc{x}{d}^H)$, 
\[Q_G =  (R_N\otimes I)^*\left(\bigoplus_{n\in N} n\cdot Q_H\right)(R_N\otimes I) \in \M_{|N|n}(\fratc{x}{d}^G)\]
and every $r \in \fratc{x}{d}^G$ of the form 
\begin{equation}\label{e:sos}
r(x) = c^*\left(\left(A_0 + \sum_i A_i q_i(x)\right)^{-1}\right)^*Q_H\left(A_0 + \sum_i A_i q_i(x)\right)^{-1}c,
\end{equation}
where the $q_i \in \fratc{x}{d}^H$, can be rewritten as 
\[r(x) = \tilde{c}^*\left(\left(\tilde{A}_0 + \sum_j \tilde{A}_j \tilde{q}_j(x)\right)^{-1}\right)^*
Q_G\left(\tilde{A}_0 + \sum_j \tilde{A}_j \tilde{q}_j(x)\right)^{-1}\tilde{c}\]
with $\tilde{q}_j\in \fratc{x}{d}^G.$
\end{lemma}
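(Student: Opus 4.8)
The plan is to run the argument of Lemma~\ref{lem:fg} and Theorem~\ref{abelianisomorphism} in a two-sided form. Regard the tuple $q_1,\dots,q_m\in\fratc{x}{d}^H$ appearing in \eqref{e:sos} as fixed; the matrix $R_N$ will depend only on $H\trianglelefteq G$ and on this tuple. As in the proof of Lemma~\ref{lem:fg}, after a linear change of the $q_i$ inside $\spn{n\cdot q_i\colon n\in N}$ we may assume $N=G/H$ acts diagonally, $n\cdot q_i=\chi_i(n)q_i$, and that the $\chi_i$ generate $N^*$ (augmenting the $q_i$ by suitable $H$-invariant eigenvectors if necessary); for each $\eta\in N^*$ fix a monomial $m_\eta$ in the $q_i$ with $n\cdot m_\eta=\eta(n)m_\eta$ and $m_\tau=1$ ($\tau$ the trivial character), and set $M:=\diag(m_\eta)_{\eta\in N^*}\in\M_{|N|}(\fratc{x}{d}^H)$, an invertible diagonal matrix. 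Write $L:=A_0+\sum_iA_iq_i$ and $L_n:=A_0+\sum_i\chi_i(n)A_iq_i$, so that $n\cdot L=L_n$. Since $r$ is $G$-invariant, averaging over $N$ and expanding \eqref{e:sos} blockwise (here using that the $G$-action commutes with the involution, as $G\subseteq\ort$, so that the factors $L_n^{-1}c$ and $(L_n^{-1}c)^*$ transform compatibly) gives
\[
r=\frac1{|N|}\sum_{n\in N}n\cdot r=\mathbf c^*(\mathcal L^{-1})^*\mathcal Q\,\mathcal L^{-1}\mathbf c,
\]
where $\mathbf c=\tfrac1{\sqrt{|N|}}(\mathbf 1\otimes c)$ (with $\mathbf 1\in\C^{|N|}$ the all-ones vector), $\mathcal Q=\bigoplus_{n\in N}n\cdot Q_H$ and $\mathcal L=\bigoplus_{n\in N}L_n=I\otimes A_0+\sum_iD_i\otimes A_iq_i$ with $D_i=\diag(\chi_i(n))_{n\in N}$.

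The $D_i$ are simultaneously diagonal and generate a copy of the regular representation of $N^*$; let $P$ be the unitary $|N|\times|N|$ matrix (a discrete Fourier transform for $N$) with $PD_iP^*=\tilde{\chi}_i$ (the permutation $e_\nu\mapsto e_{\chi_i\nu}$ of $\C^{N^*}$), with $P(\tfrac1{\sqrt{|N|}}\mathbf 1)=e_\tau$, and with $P\Pi_hP^*=\diag(\eta(h^{-1}))_{\eta\in N^*}$ for the regular representation $\Pi$ of $N$. Conjugating the previous display by $P\otimes I$ turns it into $r=\hat{\mathbf c}^*(\hat{\mathcal L}^{-1})^*\hat{\mathcal Q}\,\hat{\mathcal L}^{-1}\hat{\mathbf c}$ with $\hat{\mathbf c}=e_\tau\otimes c$, $\hat{\mathcal L}=I\otimes A_0+\sum_i\tilde{\chi}_i\otimes A_iq_i$ and $\hat{\mathcal Q}=(P\otimes I)\mathcal Q(P^*\otimes I)$. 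Now put $R_N:=P^*M\in\M_{|N|}(\fratc{x}{d}^H)$, which is invertible, so that
\[
Q_G=(R_N\otimes I)^*\mathcal Q\,(R_N\otimes I)=(M^*\otimes I)\,\hat{\mathcal Q}\,(M\otimes I).
\]
This $Q_G$ is $G$-invariant: one checks $g\cdot\hat{\mathcal Q}=(D_g\otimes I)\hat{\mathcal Q}(D_g^{-1}\otimes I)$ with $D_g=\diag(\eta(gH))_{\eta\in N^*}$ --- since $g$ permutes the summands $n\cdot Q_H$ of $\mathcal Q$ according to the regular representation of $N$, and $P$ conjugates that permutation to $D_g$ --- while $g\cdot M=D_gM$ and $g\cdot M^*=D_g^{-1}M^*$ by the transformation law of the $m_\eta$ together with the fact that the action commutes with the involution; as $D_g,M,M^*$ are diagonal, hence pairwise commuting, the twists cancel and $g\cdot Q_G=(M^*\otimes I)\hat{\mathcal Q}(M\otimes I)=Q_G$. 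This is the two-sided version of the gauge computation in the proof of Theorem~\ref{abelianisomorphism}, with the diagonal matrix there now appearing as $M$ on both sides.

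To finish, substitute $\hat{\mathcal Q}=((M^*)^{-1}\otimes I)\,Q_G\,(M^{-1}\otimes I)$ and set $\widetilde{\mathcal L}:=(M^{-1}\otimes I)\hat{\mathcal L}(M\otimes I)=(R_N^{-1}\otimes I)\mathcal L(R_N\otimes I)$. Since $m_\tau=1$ we have $(M^{-1}\otimes I)\hat{\mathbf c}=\hat{\mathbf c}$, hence $(M^{-1}\otimes I)\hat{\mathcal L}^{-1}\hat{\mathbf c}=\widetilde{\mathcal L}^{-1}\hat{\mathbf c}$ and, by taking adjoints, $\hat{\mathbf c}^*(\hat{\mathcal L}^{-1})^*((M^*)^{-1}\otimes I)=\hat{\mathbf c}^*(\widetilde{\mathcal L}^{-1})^*$; therefore
\[
r=\hat{\mathbf c}^*(\widetilde{\mathcal L}^{-1})^*Q_G\,\widetilde{\mathcal L}^{-1}\hat{\mathbf c}.
\]
Moreover $\widetilde{\mathcal L}=I\otimes A_0+\sum_i(M^{-1}\tilde{\chi}_iM)\otimes A_iq_i$, whose nonconstant entries are the elements $m_{\chi_i\nu}^{-1}m_\nu q_i$; these lie in $\fratc{x}{d}^G$ because $g\cdot(m_{\chi_i\nu}^{-1}m_\nu q_i)=(\chi_i\nu)(gH)^{-1}\,\nu(gH)\,\chi_i(gH)\cdot m_{\chi_i\nu}^{-1}m_\nu q_i=m_{\chi_i\nu}^{-1}m_\nu q_i$. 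Taking the $\tilde q_j$ to be these entries and $\tilde A_j$ the corresponding constant matrices (with $\tilde A_0=I\otimes A_0$, so that $\widetilde{\mathcal L}=\tilde A_0+\sum_j\tilde A_j\tilde q_j$), and renaming $\hat{\mathbf c}=:\tilde c$, the last display is exactly the asserted form. Nondegeneracy is inherited: $\widetilde{\mathcal L}$ is conjugate to $\mathcal L=\bigoplus_n L_n$ via the invertible rational matrix $R_N\otimes I$, and nondegeneracy of \eqref{e:sos} makes $L$ invertible on a nonempty Zariski-open set, hence makes all the finitely many translates $L_n=L(\tilde n\cdot x)$ simultaneously invertible on a nonempty Zariski-open set on which also the $q_i$ --- and therefore $M$ and $R_N$ --- are invertible.

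The delicate point is the $G$-invariance of $Q_G$: one must verify that the character-twist $D_g$ induced by $g$ on $\hat{\mathcal Q}$ (coming from $g$ permuting the blocks of $\bigoplus_{n}n\cdot Q_H$) is cancelled, on both the left and the right, by the twist it induces on $M^{\pm1}$ through $n\cdot m_\eta=\eta(n)m_\eta$. This is exactly what forces the choice $R_N=P^*M$, and it relies on a single matrix $P$ simultaneously intertwining the regular representations of $N$ and of $N^*$ in compatible bases; the fact that the $G$-action commutes with the involution is what keeps the two halves of the quadratic form in step. The remaining manipulations parallel the one-sided bookkeeping in Lemma~\ref{lem:fg} and Theorem~\ref{abelianisomorphism}.
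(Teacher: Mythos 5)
Your proof is correct and takes essentially the same approach as the paper's, which simply asserts that tracing through the proof of Theorem~\ref{t:solv} yields the two-sided realization with $\hat M = M^*$ and $R_N = \Gamma M$ for a unitary $\Gamma$. You carry out that tracing explicitly: your $P^*$ plays the role of the paper's $\Gamma$, and in addition you spell out the $N$-averaging, the verification that $Q_G$ is genuinely $G$-invariant via the cancellation of the diagonal character twists $D_g$, the identification of the invariant entries $\tilde q_j$, and the nondegeneracy argument, all of which the paper leaves implicit.
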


\begin{proof}
Consider $r(x)$ as in \eqref{e:sos}. Tracing through the proof of Theorem \ref{t:solv} we see that $r(x)$ admits a realization
$$
\resizebox{1.0 \textwidth}{!} {
$
\tilde{c}^*\left(\left(\tilde{A}_0 + \sum_j \tilde{A}_j \tilde{q}_j(x)\right)^{-1}\right)^*
((\hat{M} \Gamma^*)\otimes I) 
\left(\bigoplus_{n\in N} n\cdot Q_H\right)
((\Gamma M)\otimes I)\left(\tilde{A}_0 + \sum_j \tilde{A}_j \tilde{q}_j(x)\right)^{-1}\tilde{c},
$
}
$$
where $\Gamma$ is a unitary change of basis matrix (more precisely, columns of $\Gamma^*$ are eigenvectors for the left regular representation of $N^*$). Note that we can take $\hat{M} = M^*$. Hence $R_N=\Gamma M$ is the desired matrix.
\end{proof}

\begin{theorem}\label{thm:extendrealization}
Let $G \subset\ort$ be a finite solvable group. There exists 
an invertible matrix $R_G  \in \M_{|G|}(\fratc{x}{d})$ such that for every $Q \in \M_n(\fratc{x}{d})$,
\[Q_G =  (R_G\otimes I)^*\left(\bigoplus_{g\in G} g\cdot Q\right)(R_G\otimes I)\in \M_{|G|n}(\fratc{x}{d}^G)\]
and  every $r \in \fratc{x}{d}^G$ of the form 
\begin{equation}
\label{e:62}
r(x) = c^*\left(\left(A_0 + \sum_i A_i q_i(x)\right)^{-1}\right)^*Q\left(A_0 + \sum_i A_i q_i(x)\right)^{-1}c,
\end{equation}
where the $q_i \in \fratc{x}{d},$
can be rewritten as 
\begin{equation}
\label{e:63}
r(x) = \tilde{c}^*\left(\left(\tilde{A}_0 + \sum_j \tilde{A}_j \tilde{q}_j(x)\right)^{-1}\right)^*
Q_G\left(\tilde{A}_0 + \sum_j \tilde{A}_j \tilde{q}_j(x)\right)^{-1}\tilde{c}
\end{equation}
with $\tilde{q}_j\in \fratc{x}{d}^G.$
\end{theorem}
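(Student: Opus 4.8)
The plan is to prove the statement by induction on $|G|$, using Lemma~\ref{lem:extendrealization} as the single inductive step; the base case $G=\{e\}$ is trivial (take $R_G=I$). For the inductive step, set $H=[G,G]$. Since a nontrivial solvable group is never perfect, $H$ is a proper normal subgroup, $N:=G/H$ is abelian, and $H\subset\ort$ is again finite solvable with $|H|<|G|$, so the theorem is available for $H$: there is an invertible $R_H\in\M_{|H|}(\fratc{x}{d})$ such that for every $Q\in\M_m(\fratc{x}{d})$ one has
\[
Q_H:=(R_H\otimes I)^*\Bigl(\bigoplus_{h\in H}h\cdot Q\Bigr)(R_H\otimes I)\in\M_{|H|m}(\fratc{x}{d}^H),
\]
and every $H$-invariant rational function of the form \eqref{e:62} can be rewritten in the form \eqref{e:63} with middle matrix $Q_H$ and coefficient functions in $\fratc{x}{d}^H$.

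First I would push a given $r\in\fratc{x}{d}^G$ of the form \eqref{e:62} through this step. As $r\in\fratc{x}{d}^G\subseteq\fratc{x}{d}^H$, it acquires a representation of the shape \eqref{e:sos} appearing in Lemma~\ref{lem:extendrealization}, with $Q_H$ in the middle and all $q_i\in\fratc{x}{d}^H$. Applying Lemma~\ref{lem:extendrealization} to the present $G$, $H$, $N$ then yields an invertible $R_N\in\M_{|N|}(\fratc{x}{d}^H)$ (depending only on $G$, $H$, not on $Q$) and rewrites $r$ in the form \eqref{e:63} with coefficient functions in $\fratc{x}{d}^G$ and middle matrix
\[
Q'_G:=(R_N\otimes I)^*\Bigl(\bigoplus_{n\in N}n\cdot Q_H\Bigr)(R_N\otimes I)\in\M_{|G|m}(\fratc{x}{d}^G),
\]
the resulting expression being nondegenerate since nondegeneracy is preserved by both the inductive rewriting and the lemma. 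It remains to produce one invertible $R_G\in\M_{|G|}(\fratc{x}{d})$, independent of $Q$, with $Q'_G=(R_G\otimes I)^*\bigl(\bigoplus_{g\in G}g\cdot Q\bigr)(R_G\otimes I)$.

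To assemble $R_G$ I would fix coset representatives $g_n\in G$ for the cosets $n\in N=G/H$. Since $Q_H$ has entries in $\fratc{x}{d}^H$, the matrix $n\cdot Q_H=g_n\cdot Q_H$ is independent of the choice of representative, and unfolding $Q_H$ gives
\[
n\cdot Q_H=\bigl((g_n\cdot R_H)\otimes I\bigr)^*\Bigl(\bigoplus_{h\in H}(g_nh)\cdot Q\Bigr)\bigl((g_n\cdot R_H)\otimes I\bigr),
\]
where $g_n$ acts entrywise on $R_H$. Summing over $n$, using $\bigl(\bigoplus_n A_n\bigr)\otimes I=\bigoplus_n(A_n\otimes I)$ to pull out the block-diagonal factor $\bigoplus_{n}g_n\cdot R_H$, and reindexing the doubly indexed sum via the bijection $N\times H\to G$, $(n,h)\mapsto g_nh$, one gets a permutation matrix $\Pi\in\M_{|G|}(\FF)$ with $\bigoplus_{n\in N}\bigoplus_{h\in H}(g_nh)\cdot Q=(\Pi\otimes I)^*\bigl(\bigoplus_{g\in G}g\cdot Q\bigr)(\Pi\otimes I)$. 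Combining this with the outer factor $R_N\otimes I_{|H|}$ and aligning the tensor factors by associativity of $\otimes$, the matrix
\[
R_G:=\Pi\cdot\Bigl(\bigoplus_{n\in N}g_n\cdot R_H\Bigr)\cdot\bigl(R_N\otimes I_{|H|}\bigr)\in\M_{|G|}(\fratc{x}{d})
\]
satisfies $Q'_G=(R_G\otimes I)^*\bigl(\bigoplus_{g\in G}g\cdot Q\bigr)(R_G\otimes I)$. It is invertible, being a product of a permutation matrix, a block-diagonal matrix whose blocks are automorphic images of the invertible $R_H$, and the invertible $R_N\otimes I_{|H|}$; it depends only on $G$ (through $H$, the $g_n$, $R_H$, $R_N$, $\Pi$, all fixed before $Q$), hence is independent of $Q$; and $Q_G=Q'_G\in\M_{|G|m}(\fratc{x}{d}^G)$ by Lemma~\ref{lem:extendrealization}. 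This closes the induction.

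I expect the single delicate point to be the bookkeeping of the last paragraph: one must keep track that the representatives $g_n$ act entrywise on $R_H$ inside $Q_H$ while $R_N$ is untouched, handle the two nested $\otimes I$ factors — one of size $|H|$ from folding $R_N$ against $R_H$, one of size $m$ from $Q$ — by associativity, and correctly record the block permutation $\Pi$ produced by the reindexing $(n,h)\mapsto g_nh$. Once these pieces are lined up, both the invertibility and the $Q$-independence of $R_G$ are immediate, and everything else is a direct invocation of Lemma~\ref{lem:extendrealization} and the induction hypothesis.
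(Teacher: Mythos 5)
Your proposal is correct and takes essentially the same route as the paper, whose entire proof reads ``Apply Lemma~\ref{lem:extendrealization} and induction on the derived series of $G$''; your choice $H=[G,G]$ is precisely the first step down the derived series. You additionally supply the bookkeeping the paper leaves implicit, namely the explicit assembly $R_G=\Pi\cdot\bigl(\bigoplus_{n}g_n\cdot R_H\bigr)\cdot\bigl(R_N\otimes I_{|H|}\bigr)$ (up to the harmless left/right convention for the composed action, which only changes the reindexing bijection to $(n,h)\mapsto hg_n$), and this is indeed needed to produce the single invertible $R_G\in\M_{|G|}(\fratc{x}{d})$ that the statement demands rather than merely a chain of conjugations.
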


\begin{proof}
Apply Lemma \ref{lem:extendrealization} and induction on the derived series  of $G$.
\end{proof}

\begin{corollary}\label{cor:sos1st}
Let $G \subset\ort$ be a finite solvable group.
Then there are $q_1, \ldots, q_N \in \fratc{x}{d}^G$
such that for every $r \in \fratc{x}{d}^G,$
if
\(r = \sum_i s_i^*s_i,\)
then
\[r = \sum_j \tilde{s}_j^* q_{n_j} \tilde{s}_j,\]
where $\tilde{s}_j \in \fratc{x}{d}^G.$
\end{corollary}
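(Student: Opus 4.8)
The plan is to reduce the SOS statement to the realization-theoretic statement of Theorem \ref{thm:extendrealization} by packaging the sum of squares $r=\sum_i s_i^*s_i$ into a single quadratic form over a common realization. First I would write each $s_i\in\fratc{x}{d}^G$ via a shared realization: using the standard trick of taking direct sums of realizations, there exist $b,c\in\FF^n$, matrices $A_0,\dots,A_d$, and a column vector-valued rational expression $L(x)^{-1}c$ (where $L=A_0+\sum_iA_ix_i$, or more conveniently $L=A_0+\sum_iA_iq_i(x)$ after first passing to invariant building blocks $q_i$) such that every $s_i$ equals $c_i^*L(x)^{-1}c$ for suitable row vectors $c_i^*$; equivalently, there is a single matrix realization $S(x)=C^*L(x)^{-1}c$ whose rows are the $s_i(x)$. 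Then
\[
r(x)=\sum_i s_i(x)^*s_i(x)=\bigl(L(x)^{-1}c\bigr)^*\,\bigl(C^*C\bigr)\,\bigl(L(x)^{-1}c\bigr)
=c^*\bigl(L(x)^{-1}\bigr)^*\,Q\,L(x)^{-1}c
\]
with $Q=C^*C$ a constant positive semidefinite matrix. This is exactly the form \eqref{e:62} of Theorem \ref{thm:extendrealization}.

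Next I would apply Theorem \ref{thm:extendrealization} to this $Q$: it produces $\tilde A_j$, $\tilde q_j\in\fratc{x}{d}^G$, $\tilde c$, and the middle matrix
\[
Q_G=(R_G\otimes I)^*\Bigl(\bigoplus_{g\in G} g\cdot Q\Bigr)(R_G\otimes I)\in\M_{|G|n}(\fratc{x}{d}^G),
\]
so that $r(x)=\tilde c^*\bigl(\tilde L(x)^{-1}\bigr)^*Q_G\,\tilde L(x)^{-1}\tilde c$ with $\tilde L(x)=\tilde A_0+\sum_j\tilde A_j\tilde q_j(x)$ invertible at generic evaluations. Now the point is that the entries of $Q_G$ — being invariant rational functions — do not depend on the particular $Q$ we started with; only the conjugation by $(R_G\otimes I)$ does, and $R_G$ is fixed once and for all by $G$. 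Since $Q=C^*C$ was positive semidefinite, $\bigoplus_g g\cdot Q$ is a positive semidefinite matrix over $\fratc{x}{d}^H$-type entries, hence its congruence $Q_G$ is a positive semidefinite matrix of invariant rational functions; writing a symmetric factorization $Q_G=\sum_k E_k^*E_k$ over $\fratc{x}{d}^G$ (which exists by diagonalizing the hermitian matrix $Q_G$ over the skew field using its positivity) and setting $\tilde s_k(x)=E_k\,\tilde L(x)^{-1}\tilde c$ gives $r=\sum_k\tilde s_k^*\tilde s_k$ — but with $\tilde s_k$ now ranging over a finite list of "shapes" determined solely by $R_G$ and the possible $\tilde L$'s. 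To land precisely on the stated form $r=\sum_j\tilde s_j^*q_{n_j}\tilde s_j$, I would not diagonalize but instead observe that $Q_G$ decomposes blockwise as $\bigoplus_{g}(g\cdot Q)$ conjugated by a fixed $R_G$; absorbing the $R_G$-conjugation and the $C$ of the original realization into the outer factors, the "inner" weights that survive are a finite universal family $q_1,\dots,q_N\in\fratc{x}{d}^G$ (the entries of the middle pieces coming from $R_G$ and the finitely many generators $\tilde q_j$), independent of $r$. One then reads off $r=\sum_j\tilde s_j^*q_{n_j}\tilde s_j$ with $\tilde s_j\in\fratc{x}{d}^G$.

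I expect the main obstacle to be the bookkeeping that makes the weights $q_1,\dots,q_N$ genuinely \emph{uniform} in $r$: a priori Theorem \ref{thm:extendrealization} produces a middle matrix $Q_G$ that does depend on the input $Q$, so one has to isolate exactly which part is $r$-independent (the matrix $R_G$ and the universal generating set of invariants for $\fratc{x}{d}^G$ from Theorem \ref{t:solv}) and which part can be pushed into the variable vectors $\tilde s_j$. The clean way to handle this is to run the induction from Lemma \ref{lem:extendrealization} once, at the level of the generators: apply Theorem \ref{thm:extendrealization} to the "universal" $Q=I$ to extract the finite list $q_1,\dots,q_N$ (entries of $R_G^*R_G$ and products with the invariant generators), and then for a general $r=\sum_i s_i^*s_i$ note that $Q=C^*C$ factors through $I$ via $C$, so the congruence by $(R_G\otimes I)$ intertwines and the same $q_n$'s appear, only re-weighted by the constant $C$, which is harmless. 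A secondary, more routine point to verify carefully is the nondegeneracy of the new realization \eqref{e:63} — but this is exactly what Theorem \ref{thm:extendrealization} already guarantees (the new form is defined wherever $R_G$ and the original $L$ were invertible), so no extra work is needed there.
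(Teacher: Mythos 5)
Your proof is essentially correct and follows the same overall strategy as the paper: package $r=\sum_i s_i^*s_i$ into the quadratic form \eqref{e:62} with a constant positive semidefinite middle matrix $Q=C^*C$, invoke Theorem \ref{thm:extendrealization}, and then read off the universal invariant weights from the fixed matrix $R_G^*R_G$ in the factorization $Q_G=(R_G^*R_G)\otimes Q$.

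The one place where you diverge is the packaging step, and your route is in fact slightly more direct than the paper's. The paper writes each $s_i^*s_i$ as $\bpm 0 & b_i^*\epm \bpm c_ic_i^* & L_i^* \\ -L_i & 0\epm\inv\bpm 0\\ b_i\epm$ and then symmetrizes this bordered pencil with the identity $\frac12(M\inv+M^{-*})=M\inv\cdot\frac12(M+M^*)\cdot M^{-*}$ to arrive at \eqref{e:62}. You instead observe that $s_i^*s_i=b_i^*(L_i\inv)^*\,c_ic_i^*\,L_i\inv b_i$ is already of the form \eqref{e:62} (take $Q=c_ic_i^*$), so all you need is to concatenate the realizations by a direct sum to share a common pencil and outer vector; this gives $r=c^*(L\inv)^*(C^*C)L\inv c$ immediately and with less machinery. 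Both approaches are correct; yours avoids the Schur-complement detour.

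Where your write-up (and, to be fair, the paper's as well) stays informal is the final bookkeeping that produces scalar invariant weights $q_1,\dots,q_N$ from $Q_G=(R_G^*R_G)\otimes Q$: the \emph{entries} of $R_G^*R_G$ are not themselves the weights. The clean way to phrase it is: since $Q$ is a constant positive semidefinite matrix, absorb $I\otimes C$ into the outer vectors to reduce to a quadratic form in $R_G^*R_G\otimes I$; then take a Cholesky-type factorization $R_G^*R_G=T^*DT$ over $\fratc{x}{d}^G$ (available because $R_G$ is invertible, so the successive Schur complements are nonzero, hence invertible, in the skew field), and let $q_1,\dots,q_{|G|}$ be the diagonal entries of $D$. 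The $\tilde s_j$'s are then obtained by applying $T$ and $I\otimes C$ to the invariant vector $\tilde L\inv\tilde c$. With that clarification your argument is complete and matches the paper's in spirit.
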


\begin{proof}
If $s_i = c_i^* L_i^{-1}b_i$ is a realization of $s_i$, then
\begin{align*}
s_i^*s_i &=
\begin{pmatrix}0 & b_i^*\end{pmatrix}
\begin{pmatrix}c_ic_i^* & L_i^* \\ -L_i & 0\end{pmatrix}^{-1}
\begin{pmatrix}0 \\ b_i\end{pmatrix} \\
& = \frac12\begin{pmatrix}0 & b_i^*\end{pmatrix}
\left(\begin{pmatrix}c_ic_i^* & L_i^* \\ -L_i & 0\end{pmatrix}^{-1}
+\begin{pmatrix}c_ic_i^* & -L_i^* \\ L_i & 0\end{pmatrix}^{-1}\right)
\begin{pmatrix}0 \\ b_i\end{pmatrix} \\
& =\frac12\begin{pmatrix}0 & b_i^*\end{pmatrix}
\begin{pmatrix}c_ic_i^* & L_i^* \\ -L_i & 0\end{pmatrix}^{-1}
\left(\begin{pmatrix}c_ic_i^* & L_i^* \\ -L_i & 0\end{pmatrix}
+\begin{pmatrix}c_ic_i^* & -L_i^* \\ L_i & 0\end{pmatrix}\right)
\begin{pmatrix}c_ic_i^* & -L_i^* \\ L_i & 0\end{pmatrix}^{-1}
\begin{pmatrix}0 \\ b_i\end{pmatrix} \\
& =\begin{pmatrix}0 & b_i^*\end{pmatrix}
\begin{pmatrix}c_ic_i^* & L_i^* \\ -L_i & 0\end{pmatrix}^{-1}
\begin{pmatrix}
c_ic_i^* &0 \\ 0 & 0
\end{pmatrix}
\begin{pmatrix}c_ic_i^* & -L_i^* \\ L_i & 0\end{pmatrix}^{-1}
\begin{pmatrix}0 \\ b_i\end{pmatrix}.
\end{align*}
Therefore $r$ can be written as in \eqref{e:62} for a constant positive semidefinite $Q=P^*P$. By Theorem \ref{thm:extendrealization}, $r$ can be written as in \eqref{e:63} with $Q_G=(R_G\otimes P)^*(R_G\otimes P)$, which then yields the desired $G$-invariant sum of hermitian squares presentation for $r$.
\end{proof}

Recall that a rational function $r$ is {\bf positive} if for every $n\in\N$ and $X=X^*\in\M_n(\C)^d$, $r$ is defined at $X$ and $r(X)$ is positive semidefinite. We are now ready to prove Theorem \ref{c:h17}.

\begin{proof}[Proof of Theorem \ref{c:h17}]
Since $r$ is positive semidefinite, it is a sum of hermitian squares by \cite[Theorem 4.5]{KPV}. The conclusion now follows
from Corollary \ref{cor:sos1st}.
\end{proof}

\def\rex{\mathfrak r} 

\subsection{Quadratic modules and free semialgebraic sets}\label{ssec:qm}
The {\bf quadratic module}  
associated to a symmetric matrix $Q\in \M_n(\fratc{x}{d})$
is
\[
\resizebox{1.0 \textwidth}{!} 
{
    $
\QM_{\fratc{x}{d}}(Q)=\Big\{\sum_i u_i^*u_i+ \sum_{j}v_j^* Q v_j \colon v_j\in{\fratc{x}{d}}^n, \, u_i\in \fratc{x}{d}\Big\}.
$
}
\]
It is called {\bf Archimedean} if there is $N\in\N$ so that $N-\sum_j x_j^2\in \QM_{\fratc{x}{d}}(Q)$.
The associated {\bf free semialgebraic set} is
\[
\cD_Q=
\big\{ X=X^*\in B(\cH)^d\colon Q(X)\succeq0\big\},
\]
where $\cH$ is a separable Hilbert space.

\begin{corollary}\label{cor:quadraticmodule}
Let $G \subset\ort$ be a finite solvable group. 
Then
\[\QM_{\fratc{x}{d}}(Q)\cap \fratc{x}{d}^G = \QM_{\fratc{x}{d}^G}(Q_G).\]
\end{corollary}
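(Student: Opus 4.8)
The plan is to prove the two inclusions separately. The inclusion $\supseteq$ is essentially a substitution, while $\subseteq$ is the substantive direction and rests on Theorem~\ref{thm:extendrealization}.

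For $\supseteq$, take a generator $\sum_i u_i^*u_i+\sum_j v_j^*Q_Gv_j$ of $\QM_{\fratc{x}{d}^G}(Q_G)$ with $u_i\in\fratc{x}{d}^G$ and $v_j\in(\fratc{x}{d}^G)^{|G|n}$. Since every ingredient is $G$-invariant, the whole expression lies in $\fratc{x}{d}^G$, and each $u_i^*u_i$ is already a hermitian square in $\fratc{x}{d}$, hence in $\QM_{\fratc{x}{d}}(Q)$. For a term $v_j^*Q_Gv_j$ I would unfold $Q_G=(R_G\otimes I)^*\big(\bigoplus_{g\in G}g\cdot Q\big)(R_G\otimes I)$: writing $w_j=(R_G\otimes I)v_j$ and using that $R_G$ intertwines the $G$-action on its rows with the regular representation of $G$ (this is exactly what makes $Q_G$ invariant in Theorem~\ref{thm:extendrealization}), the blocks of $w_j$ form a single $G$-orbit, $w_j^{(g)}=g\cdot w_j^{(e)}$, so that $v_j^*Q_Gv_j=\sum_{g\in G}g\cdot\big((w_j^{(e)})^*Q\,w_j^{(e)}\big)$ is the $G$-symmetrization of a single $Q$-sandwich from $\QM_{\fratc{x}{d}}(Q)$; one then has to argue that this symmetrized sum again lies in $\QM_{\fratc{x}{d}}(Q)$.

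For $\subseteq$, let $r=\sum_i u_i^*u_i+\sum_j v_j^*Qv_j\in\QM_{\fratc{x}{d}}(Q)\cap\fratc{x}{d}^G$. The idea is to absorb everything into one realization with an enlarged middle matrix, and then invoke Theorem~\ref{thm:extendrealization}. Using the realization identity from the proof of Corollary~\ref{cor:sos1st}, each $u_i^*u_i$ equals $\hat c_i^*(\hat L_i^{-1})^*\hat P_i\hat L_i^{-1}\hat c_i$ for a linear pencil $\hat L_i$ and a constant positive semidefinite $\hat P_i$; and realizing the vectors $v_j$ jointly, $\sum_j v_j^*Qv_j$ becomes $c^*(L^{-1})^*\big(\bigoplus_j Q\big)L^{-1}c$ up to a constant conjugation which can be folded into $L$ and $c$. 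Altogether $r=\tilde c^*(\tilde L^{-1})^*\mathbf{Q}\,\tilde L^{-1}\tilde c$ with $\mathbf{Q}=P^*P\oplus Q\oplus\cdots\oplus Q\in\M_N(\fratc{x}{d})$. Applying Theorem~\ref{thm:extendrealization} with $\mathbf{Q}$ in place of $Q$, and using $g\cdot(P^*P)=P^*P$, the matrix $\bigoplus_{g}g\cdot\mathbf{Q}$ is, after reordering, $(I_{|G|}\otimes P^*P)\oplus\bigoplus\big(\bigoplus_g g\cdot Q\big)$, so $\mathbf{Q}_G$ is (up to the same reordering, which $R_G\otimes I$ respects block-by-block) a positive semidefinite constant block together with several copies of $Q_G$. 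Reading off the resulting invariant realization $r=\tilde c^*(\tilde L^{-1})^*\mathbf{Q}_G\tilde L^{-1}\tilde c$ with $\tilde q_j\in\fratc{x}{d}^G$, and splitting $\tilde L^{-1}\tilde c$ along the block structure, $r$ becomes a sum of hermitian squares of invariants plus $Q_G$-sandwiches by invariants, i.e.\ $r\in\QM_{\fratc{x}{d}^G}(Q_G)$.

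I expect the main obstacle to be the bookkeeping in the $\subseteq$ direction: one must choose the joint realization of the $v_j$ so that the middle matrix comes out as an honest block-diagonal sum of copies of $Q$ rather than conjugates $C_jQC_j^*$ (which is why one passes through an upper-triangular pencil presenting $v_j$ as $(I\,|\,0)M^{-1}e$), and one must verify that the constant conjugation $R_G\otimes I$ genuinely preserves the block decomposition of $\mathbf{Q}_G$ into the constant block and the $Q_G$-blocks. The $\supseteq$ direction then requires the companion check that $G$-symmetrizing a $Q$-weighted hermitian square stays inside $\QM_{\fratc{x}{d}}(Q)$, which is where the invariance of $Q_G$ (equivalently, the compatibility of $R_G$ with the $G$-action) has to be used carefully.
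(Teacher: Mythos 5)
Your overall plan — combine the generators of $\QM_{\fratc{x}{d}}(Q)$ into a single butterfly realization with an enlarged middle matrix and then invoke Theorem~\ref{thm:extendrealization} — is indeed the route the paper intends (the paper's entire proof is ``Use Theorem~\ref{thm:extendrealization}''). Your reduction of $\subseteq$ to the bookkeeping claim that $(R_G\otimes I)^*(\cdot)(R_G\otimes I)$ preserves the block structure of $\bigoplus_g g\cdot\mathbf{Q}$ is correct, and that claim does hold: permuting the index set from $(g,k)$ to $(k,g)$ turns $R_G\otimes I_{\sum n_k}$ into $\bigoplus_k(R_G\otimes I_{n_k})$, so $\mathbf{Q}_G$ is permutation-similar to $\bigoplus_k(\mathbf{Q}_k)_G$. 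However, there is a residual step you have swept under the rug. After this reordering the constant block gives $\bigl(P^*P\bigr)_G=(R_G\otimes P)^*(R_G\otimes P)=R_G^*R_G\otimes P^*P$, which is a positive semidefinite matrix \emph{with entries in} $\fratc{x}{d}^G$ but whose natural square root $R_G\otimes P$ is \emph{not} over $\fratc{x}{d}^G$. To conclude that the corresponding term $w^*(R_G^*R_G\otimes P^*P)w$ (with $w$ invariant) lies in $\QM_{\fratc{x}{d}^G}(Q_G)$ one must either (a) produce a factorization $R_G^*R_G=T^*T$ with $T$ over $\fratc{x}{d}^G$ (e.g.\ via an $LDL^*$ decomposition over $\fratc{x}{d}^G$ together with a positivity certificate for the diagonal), or (b) show directly that $1_G$-sandwiches already lie in $\QM_{\fratc{x}{d}^G}(Q_G)$. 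This is a genuine point, not mere bookkeeping, and the paper's proof of Corollary~\ref{cor:sos1st} glosses over exactly the same issue.

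For $\supseteq$, your worry at the end is well founded and in fact fatal without an extra hypothesis on $Q$. Your unfolding shows correctly that for invariant $v$,
\[
v^*Q_Gv=\sum_{g\in G}g\cdot\bigl(w^*Qw\bigr),\qquad w=\text{(identity block of }(R_G\otimes I)v),
\]
i.e.\ the $G$-symmetrization of a $Q$-sandwich. But $\QM_{\fratc{x}{d}}(Q)$ is not closed under the $G$-action unless $g\cdot Q$ is unitarily conjugate to $Q$ (equivalently $\cD_Q$ is $G$-invariant), so the symmetrized sum need not lie in $\QM_{\fratc{x}{d}}(Q)$. Concretely, take $G=S_2$ acting on $\fratcm{x,y}$ by swapping $x,y$, and $Q=x\in\M_1(\fratcm{x,y})$. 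Then $Q_G=\tfrac12\bigl(\begin{smallmatrix}a & b\\ b & c\end{smallmatrix}\bigr)$ with $a=x+y$, $b=(x-y)^2$, $c=(x-y)(x+y)(x-y)$, and the invariant constant vector $v=(1,0)^{\!*}$ gives $v^*Q_Gv=\tfrac12(x+y)\in\QM_{\fratc{x}{d}^G}(Q_G)$; yet $\tfrac12(x+y)\notin\QM_{\fratcm{x,y}}(x)$, since every element of $\QM_{\fratcm{x,y}}(x)$ is positive semidefinite on $\{(X,Y):X\succeq0\}$ but $\tfrac12(X+Y)$ is not. So the inclusion $\supseteq$ fails in general; it holds under the additional (natural, and implicit in all the paper's examples) hypothesis that $g\cdot Q$ is unitarily equivalent to $Q$ for all $g\in G$. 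Note that only the $\subseteq$ inclusion is used in Corollaries~\ref{prop:posSS} and~\ref{prop:ConvexPosSS}, so the paper's applications are unaffected, but you should not try to prove $\supseteq$ in the stated generality.
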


\begin{proof}
	Use Theorem \ref{thm:extendrealization}.
\end{proof}

Let $\rex$ be a formal rational expression. We say that $\rex$ is {\bf (strictly) positive} on $\cD_Q$ if for every $X\in\cD_Q$, $\rex$ is defined at $X$ and $\rex(X)$ is a positive semidefinite (definite) operator. In this case we write $\rex\succeq0$ ($\rex\succ0$) on $\cD_Q$.

The reason for using formal rational expressions is that rational functions (as elements of the free skew field) do not admit unambiguous evaluations on $B(\cH)^d$. For example, the expression $\rex=x_1(x_2x_1)^{-1}x_2-1$ represents the zero element of the free skew field, but admits nonzero evaluations on operators, namely $\rex(S,S^*)\neq0$ where $S$ is the unilateral shift on $\ell^2(\N)$.

\begin{corollary}\label{prop:posSS}
Let $G \subset\ort$ be a finite solvable group. 
Suppose $Q=Q^*\in\M_n(\fpolyc{x}{d})$ is such that $\QM_{\fratc{x}{d}}(Q)$ is Archimedean.
If $\rex$ is a formal rational expression such that $\rex \succ 0$ on $\cD_Q$ and $\rex$ induces a $G$-invariant rational function $r$, then
$r\in \QM_{\fratc{x}{d}^G}(Q_G).$
\end{corollary}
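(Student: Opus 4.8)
The plan is to decouple the group action from the positivity certificate: first establish, ignoring $G$, that $r$ lies in the ambient quadratic module $\QM_{\fratc{x}{d}}(Q)$, and then invoke Corollary~\ref{cor:quadraticmodule} to conclude that an invariant element of this quadratic module in fact already lies in the invariant quadratic module $\QM_{\fratc{x}{d}^G}(Q_G)$.

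For the first step I would apply the Archimedean Positivstellensatz for noncommutative rational expressions (a rational, operator-valued version of the Helton--McCullough Positivstellensatz). The hypotheses are in place: $Q$ has polynomial entries, $\QM_{\fratc{x}{d}}(Q)$ is Archimedean by assumption, and $\rex$ is a formal rational expression that is defined and strictly positive at every $X\in\cD_Q$, including all tuples of self-adjoint bounded operators on a separable Hilbert space. The conclusion is an identity $r=\sum_i u_i^*u_i+\sum_j v_j^*Qv_j$ in $\fratc{x}{d}$, i.e., $r\in\QM_{\fratc{x}{d}}(Q)$. The point requiring care here is the passage between the formal expression $\rex$ and the rational function $r$ it induces: one must use the version of the Positivstellensatz stated for expressions — since rational functions do not admit unambiguous operator evaluations — and check that it outputs a representation of $r$ as an element of the free skew field. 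This is exactly the regime in which such a result is phrased, and the Archimedean condition is what makes it go through.

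The second step is then immediate. Since $\rex$ induces the $G$-invariant rational function $r$, we have $r\in\QM_{\fratc{x}{d}}(Q)\cap\fratc{x}{d}^G$, and Corollary~\ref{cor:quadraticmodule} identifies this intersection with $\QM_{\fratc{x}{d}^G}(Q_G)$; hence $r\in\QM_{\fratc{x}{d}^G}(Q_G)$, as claimed.

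The only genuine obstacle is the first step, namely correctly quoting and applying the (non-invariant) rational Archimedean Positivstellensatz and reconciling the notion of strict positivity for formal expressions used there with the one in the statement above. Once $r$ is known to sit in the ambient quadratic module, the descent to the invariant quadratic module costs nothing beyond Corollary~\ref{cor:quadraticmodule}, which in turn rests on the transfer matrix $R_G$ produced in Theorem~\ref{thm:extendrealization}.
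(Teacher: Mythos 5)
Your proposal matches the paper's proof exactly: the paper first invokes the (non-invariant) Archimedean Positivstellensatz for rational expressions, cited as Theorem~2.1 of Pascoe's paper, to obtain $r\in\QM_{\fratc{x}{d}}(Q)$, and then applies Corollary~\ref{cor:quadraticmodule} to descend to $\QM_{\fratc{x}{d}^G}(Q_G)$. Your discussion of the care needed in passing between formal expressions and rational functions reflects the same subtlety the paper addresses by stating its hypotheses in terms of $\rex$.
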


\begin{proof}
By \cite[Theorem 2.1]{PascoePAMS}, $r\in \QM_{\fratc{x}{d}}(Q)$. Now Corollary \ref{cor:quadraticmodule} finishes the proof.
\end{proof}

When the semialgebraic set $\cD_Q$ is convex, in which case one can assume that $Q$ is a symmetric affine matrix by the renowned Helton--McCullough theorem \cite{HM12}, one can certify (non-strict) positivity on $\cD_Q$. 

\begin{corollary}\label{prop:ConvexPosSS}
Let $G \subset\ort$ be a finite solvable group. 
Assume $Q=Q^* \in\M_n(\fpolyc{x}{d})$ is linear with $Q(0)=I$.
If $\rex$ is a formal rational expression such that $\rex \succeq 0$ on $\cD_Q$ and $\rex$ induces a $G$-invariant rational function $r$, then
$r\in \QM_{\fratc{x}{d}^G}(Q_G).$
\end{corollary}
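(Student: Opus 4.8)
The plan is to follow the proof of Corollary \ref{prop:posSS} verbatim, substituting for the Archimedean input a convex Positivstellensatz adapted to monic linear matrix inequality domains. Concretely, since $Q$ is linear with $Q(0)=I$, the free semialgebraic set $\cD_Q$ is a monic LMI domain, and the relevant rational Positivstellensatz (the non-strict, convex counterpart in \cite{PascoePAMS} of the Helton--McCullough theorem \cite{HM12}) applies: a formal rational expression $\rex$ that is positive semidefinite on all of $\cD_Q$ and that induces a bona fide rational function $r$ satisfies $r\in\QM_{\fratc{x}{d}}(Q)$. This is the only ingredient outside the present section.

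With $r\in\QM_{\fratc{x}{d}}(Q)$ in hand, the conclusion is formal. By hypothesis $\rex$ induces a $G$-invariant rational function, so $r\in\fratc{x}{d}^G$, whence $r$ lies in $\QM_{\fratc{x}{d}}(Q)\cap\fratc{x}{d}^G$. Corollary \ref{cor:quadraticmodule} --- which in turn rests on Theorem \ref{thm:extendrealization} and on the hypothesis $G\subset\ort$, guaranteeing that the $G$-action is compatible with the involution --- identifies this intersection with $\QM_{\fratc{x}{d}^G}(Q_G)$, so $r\in\QM_{\fratc{x}{d}^G}(Q_G)$, as claimed.

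The only delicate point is lining up the hypotheses of the external rational Positivstellensatz with those stated here: one needs the monic normalization $Q(0)=I$ (not merely convexity of $\cD_Q$) so that non-strict positivity --- as opposed to strict positivity, which was handled in Corollary \ref{prop:posSS} under an Archimedean assumption --- already forces membership in $\QM_{\fratc{x}{d}}(Q)$, and one must retain the hypothesis that $\rex$ induces a rational function, since formal rational expressions need not have unambiguous operator evaluations (as the example $x_1(x_2x_1)^{-1}x_2-1$ discussed above shows). Granting that input, nothing further is needed: the passage to the invariant quadratic module is exactly as in the strictly positive case, and requires no new estimates or realization-theoretic bookkeeping beyond what Theorem \ref{thm:extendrealization} already provides.
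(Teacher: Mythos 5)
Your proposal is correct and follows the paper's proof exactly: invoke the non-strict convex rational Positivstellensatz of \cite{PascoePAMS} (Theorem 3.1 there) to get $r\in\QM_{\fratc{x}{d}}(Q)$, then apply Corollary \ref{cor:quadraticmodule}. The added remarks about the monic normalization and the need for $\rex$ to induce a rational function are accurate context but not new content.
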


\begin{proof}
By \cite[Theorem 3.1]{PascoePAMS}, $r\in \QM_{\fratc{x}{d}}(Q)$. Now apply Corollary \ref{cor:quadraticmodule}.
\end{proof}

\begin{example}
Let $G=S_2$ act on $\fratcm{x,y}$. Then
$$a=x+y,\quad b=(x-y)^2,\quad c=(x-y)(x+y)(x-y)$$
are free generators of $\fratcm{x,y}^G$. The matrix $R_G$ from Theorem \ref{thm:extendrealization} equals
$$R_G
=\frac{1}{\sqrt{2}}
\begin{pmatrix}
1 & 1 \\ -1 & 1
\end{pmatrix}\diag(1,x-y)
=\frac{1}{\sqrt{2}}
\begin{pmatrix}
1 & x-y \\ -1 & x-y
\end{pmatrix}.$$
By computing the invariant middle matrix $Q_G$ we obtain the following Positivstellens\"atze.
\begin{enumerate}
\item (Entire space) If $Q=1$ then $Q_G=\diag(1,b)$. By Theorem \ref{c:h17}, every positive $G$-invariant rational function $r$ is of the form
$$r=\sum_ju_j^*u_j+\sum_jv_jbv_j^*,\qquad u_j,v_j\in\fratcm{x,y}^G.$$

\item (Disk) If $Q=1-x^2-y^2$ then
$$Q_G=\diag\left(1-\tfrac12 (a^2+b),b-\tfrac12 (cb^{-1}c+b^2)\right).$$
Since $Q$ clearly generates an Archimedean quadratic module, every $G$-invariant rational expression strictly positive on the disk
$$\{(X,Y)\colon X^2+Y^2\preceq I \}$$
induces a rational function of the form
$$\sum_ju_j^*u_j+\sum_jv_j(1-\tfrac12 (a^2+b))v_j^*+\sum_jw_j(b-\tfrac12 (cb^{-1}c+b^2))w_j^*$$
for $u_j,v_j,w_j\in\fratcm{x,y}^G$ by Corollary \ref{prop:posSS}. On the other hand, the disk also admits a linear matrix representation given by
$$Q'=\begin{pmatrix}
1 & 0 & x \\ 0& 1 & y \\ x& y & 1
\end{pmatrix},$$
which yields
$$Q'_G=\frac12
\begin{pmatrix}
2 & 0 & a & 0 & 0 & b \\
0 & 2 & a & 0 & 0 & -b \\
a & a & 2 & b & -b & 0\\
0 & 0 & b & 2b & 0 & c \\
0 & 0 & -b & 0 & 2b & c \\
b & -b & 0 & c & c & 2b
\end{pmatrix}.$$
Note that the free semialgebraic set associated with $Q'_G$ as a matrix in variables $a,b,c$ is also convex. By Corollary \ref{prop:ConvexPosSS} we can use $Q'_G$ to describe $G$-invariant positivity on the disk.

\item (Bidisk) If $Q=\diag(1-x^2,1-y^2)$ then
$$Q_G=\frac12
\begin{pmatrix}
2-\tfrac12(a^2+b) & 0 &-\tfrac12(c+ab) & 0\\
0 & 2-\tfrac12(a^2+b) & 0 &\tfrac12(c+ab)\\
-\tfrac12(c+ba) &0 & 2b-\tfrac12 (cb^{-1}c+ b^2) & 0 \\
0& \tfrac12(c+ba) & 0& 2b-\tfrac12 (cb^{-1}c+b^2)
\end{pmatrix}.$$
Note that $2Q_G$ is unitarily similar to a direct sum of two copies of
$$S=\begin{pmatrix}
2-\tfrac12(a^2+b) & \tfrac12(c+ab) \\ \tfrac12(c+ba) & 2b-\tfrac12 (cb^{-1}c+b^2)
\end{pmatrix}.$$
Every $G$-invariant rational expression strictly positive on the bidisk
$$\{(X,Y)\colon X^2\preceq I \ \&\  Y^2\preceq I \}$$
induces a rational function in $\QM_{\fratcm{x,y}^G}(S)$ by Corollary \ref{prop:posSS}. As in the case of the disk, bidisk can also be represented by a monic linear matrix inequality, which by Corollary \ref{prop:ConvexPosSS} then gives a description of invariant expressions positive on the bidisk.

\item (Positive orthant) If $Q=\diag(x,y)$ then $2Q_G$ is unitarily similar to a direct sum of two copies of
$$S=\begin{pmatrix}
a & b \\
b & c
\end{pmatrix}.$$
The positive orthant
$$\{(X,Y)\colon X\succeq0 \ \& \ Y\succeq0 \}$$
is a convex semialgebraic set, and after a scalar shift it admits a monic linear matrix representation. Hence rational expressions positive on the orthant induce rational functions in $\QM_{\fratcm{x,y}}(Q)$ by \cite[Theorem 3.1]{PascoePAMS}. The $G$-invariant rational functions among them then lie to $\QM_{\fratcm{x,y}^G}(S)$ by Corollary \ref{cor:quadraticmodule}.

\end{enumerate}
\end{example}

\def\om{\omega}
\begin{example}
Let $G=\mathbb{Z}_3$ act on $\fratcm{x,y,z}$. Let $\om=-\frac12+i\frac{\sqrt{3}}{2}$ and
$$q_1=\om x+\om^2 y+z,\quad q_2=\om^2 x+\om y+z.$$
Then
$$R_G=\frac{1}{\sqrt{3}}
\begin{pmatrix}
1 & 1 & 1 \\
\om & \om^2 & 1 \\
\om^2 & \om & 1 \\
\end{pmatrix}\diag(1,q_2,q_1).$$
For $Q=1$ we get $Q_G=\diag(1,q_1q_2,q_2q_1)$. Therefore all positive semidefinite $G$-invariant rational functions in $\fratcm{x,y,z}$ are of the form
$$\sum_ju_j^*u_j+\sum_jv_jq_1q_2v_j^*+\sum_jw_jq_2q_1w_j^*,
\qquad u_j,v_j,w_j\in\fratcm{x,y,z}^G.$$
\end{example}

\bibliography{references}

\newcommand{\etalchar}[1]{$^{#1}$}
\begin{thebibliography}{CHKK10}

\bibitem[AM72]{AM72}
Michael Artin and David~B. Mumford.
\newblock Some elementary examples of unirational varieties which are not
  rational.
\newblock {\em Proc. London Math. Soc. (3)}, 25:75--95, 1972.

\bibitem[AMY18]{AMY18}
Jim Agler, John~E. McCarthy, and N.~J. Young.
\newblock Non-commutative manifolds, the free square root and symmetric
  functions in two non-commuting variables.
\newblock {\em Trans. London Math. Soc.}, 5(1):132--183, 2018.

\bibitem[AY14]{AY14}
Jim Agler and Nicholas~J. Young.
\newblock Symmetric functions of two noncommuting variables.
\newblock {\em J. Funct. Anal.}, 266(9):5709--5732, 2014.

\bibitem[AY17]{AY17}
Jim Agler and Nicholas~J. Young.
\newblock Realization of functions on the symmetrized bidisc.
\newblock {\em J. Math. Anal. Appl.}, 453(1):227--240, 2017.

\bibitem[BGM05]{BGM05}
Joseph~A. Ball, Gilbert Groenewald, and Tanit Malakorn.
\newblock Structured noncommutative multidimensional linear systems.
\newblock {\em SIAM J. Control Optim.}, 44(4):1474--1528, 2005.

\bibitem[BGM06]{BGM06}
Joseph~A. Ball, Gilbert Groenewald, and Tanit Malakorn.
\newblock Conservative structured noncommutative multidimensional linear
  systems.
\newblock In {\em The state space method generalizations and applications},
  volume 161 of {\em Oper. Theory Adv. Appl.}, pages 179--223. Birkh\"{a}user,
  Basel, 2006.

\bibitem[BR11]{BR11}
Jean Berstel and Christophe Reutenauer.
\newblock {\em Noncommutative rational series with applications}, volume 137 of
  {\em Encyclopedia of Mathematics and its Applications}.
\newblock Cambridge University Press, Cambridge, 2011.

\bibitem[Bro98]{Broecker}
Ludwig Broecker.
\newblock On symmetric semialgebraic sets and orbit spaces.
\newblock In {\em Singularities {S}ymposium---\L ojasiewicz 70 ({K}rak\'{o}w,
  1996; {W}arsaw, 1996)}, volume~44 of {\em Banach Center Publ.}, pages 37--50.
  Polish Acad. Sci. Inst. Math., Warsaw, 1998.

\bibitem[BRRZ08]{BRRZ08}
Nantel Bergeron, Christophe Reutenauer, Mercedes Rosas, and Mike Zabrocki.
\newblock Invariants and coinvariants of the symmetric groups in noncommuting
  variables.
\newblock {\em Canad. J. Math.}, 60(2):266--296, 2008.

\bibitem[CHHK15]{CHHK15}
Huah Chu, Akinari Hoshi, Shou-Jen Hu, and Ming-chang Kang.
\newblock Noether's problem for groups of order 243.
\newblock {\em J. Algebra}, 442:233--259, 2015.

\bibitem[CHKK10]{CHKK10}
Huah Chu, Shou-Jen Hu, Ming-chang Kang, and Boris~E. Kunyavskii.
\newblock Noether's problem and the unramified {B}rauer group for groups of
  order 64.
\newblock {\em Int. Math. Res. Not. IMRN}, (12):2329--2366, 2010.

\bibitem[CKS09]{CKS}
Jaka Cimpri\v{c}, Salma Kuhlmann, and Claus Scheiderer.
\newblock Sums of squares and moment problems in equivariant situations.
\newblock {\em Trans. Amer. Math. Soc.}, 361(2):735--765, 2009.

\bibitem[Coh95]{Coh95}
Paul~Moritz Cohn.
\newblock {\em Skew Fields: Theory of General Division Rings}.
\newblock Encyclopedia of Mathematics and its Applications. Cambridge
  University Press, Cambridge, 1995.

\bibitem[Coh06]{Coh06}
Paul~Moritz Cohn.
\newblock {\em Free ideal rings and localization in general rings}, volume~3 of
  {\em New Mathematical Monographs}.
\newblock Cambridge University Press, Cambridge, 2006.

\bibitem[CPTD18]{CPT-D18}
David Cushing, James~E. Pascoe, and Ryan Tully-Doyle.
\newblock Free functions with symmetry.
\newblock {\em Math. Z.}, 289(3-4):837--857, 2018.

\bibitem[CTS07]{C-TS07}
Jean-Louis Colliot-Th\'el\`ene and Jean-Jacques Sansuc.
\newblock The rationality problem for fields of invariants under linear
  algebraic groups (with special regards to the {B}rauer group).
\newblock In {\em Algebraic groups and homogeneous spaces}, volume~19 of {\em
  Tata Inst. Fund. Res. Stud. Math.}, pages 113--186. Tata Inst. Fund. Res.,
  Mumbai, 2007.

\bibitem[DK15]{DK15}
Harm Derksen and Gregor Kemper.
\newblock {\em Computational invariant theory}, volume 130 of {\em
  Encyclopaedia of Mathematical Sciences}.
\newblock Springer, Heidelberg, enlarged edition, 2015.
\newblock With two appendices by Vladimir L. Popov, and an addendum by Norbert
  A'Campo and Popov, Invariant Theory and Algebraic Transformation Groups,
  VIII.

\bibitem[GKL{\etalchar{+}}95]{GKLLRT95}
Israel~M. Gelfand, Daniel Krob, Alain Lascoux, Bernard Leclerc, Vladimir~S.
  Retakh, and Jean-Yves Thibon.
\newblock Noncommutative symmetric functions.
\newblock {\em Adv. Math.}, 112(2):218--348, 1995.

\bibitem[GM85]{GM85}
Jessy~W. Grizzle and Steven~I. Marcus.
\newblock The structure of nonlinear control systems possessing symmetries.
\newblock {\em IEEE Trans. Automat. Control}, 30(3):248--258, 1985.

\bibitem[GR08]{GR08}
Bruce Gilligan and Guy~J. Roos, editors.
\newblock {\em Symmetries in complex analysis}, volume 468 of {\em Contemporary
  Mathematics}.
\newblock American Mathematical Society, Providence, RI, 2008.
\newblock Lectures from the Workshop on Several Complex Variables, Analysis on
  Complex Lie Groups and Homogeneous Spaces held at Zhejiang University,
  Hangzhou, October 17--29, 2005.

\bibitem[HM12]{HM12}
J.~William Helton and Scott McCullough.
\newblock Every convex free basic semi-algebraic set has an {LMI}
  representation.
\newblock {\em Ann. of Math. (2)}, 176(2):979--1013, 2012.

\bibitem[HMV06]{HMV06}
J.~William Helton, Scott~A. McCullough, and Victor Vinnikov.
\newblock Noncommutative convexity arises from linear matrix inequalities.
\newblock {\em J. Funct. Anal.}, 240(1):105--191, 2006.

\bibitem[Isa76]{Isa76}
I.~Martin Isaacs.
\newblock {\em Character theory of finite groups}.
\newblock Academic Press [Harcourt Brace Jovanovich, Publishers], New
  York-London, 1976.
\newblock Pure and Applied Mathematics, No. 69.

\bibitem[JS]{JS+}
Urban Jezernik and Jonatan Sánchez.
\newblock Irrationality of generic quotient varieties via {B}ogomolov
  multipliers.
\newblock {\em preprint}, arXiv:1811.01851.

\bibitem[KPV17]{KPV}
Igor Klep, James~Eldred Pascoe, and Jurij Vol\v{c}i\v{c}.
\newblock Regular and positive noncommutative rational functions.
\newblock {\em J. Lond. Math. Soc. (2)}, 95(2):613--632, 2017.

\bibitem[Kra84]{Kra84}
Hanspeter Kraft.
\newblock {\em Geometrische {M}ethoden in der {I}nvariantentheorie}.
\newblock Aspects of Mathematics, D1. Friedr. Vieweg \& Sohn, Braunschweig,
  1984.

\bibitem[KVV12]{KVV12}
Dmitry~S. Kaliuzhnyi-Verbovetskyi and Victor Vinnikov.
\newblock Noncommutative rational functions, their difference-differential
  calculus and realizations.
\newblock {\em Multidimens. Syst. Signal Process.}, 23(1-2):49--77, 2012.

\bibitem[Kwa95]{Kwa95}
Huibert Kwakernaak.
\newblock Symmetries in control system design.
\newblock In {\em Trends in control ({R}ome, 1995)}, pages 17--51. Springer,
  Berlin, 1995.

\bibitem[Lew74]{Lew74}
Jacques Lewin.
\newblock Fields of fractions for group algebras of free groups.
\newblock {\em Trans. Amer. Math. Soc.}, 192:339--346, 1974.

\bibitem[Lin00]{Lin00}
Peter~A. Linnell.
\newblock A rationality criterion for unbounded operators.
\newblock {\em J. Funct. Anal.}, 171(1):115--123, 2000.

\bibitem[LS01]{LS01}
Roger~C. Lyndon and Paul~E. Schupp.
\newblock {\em Combinatorial group theory}.
\newblock Classics in Mathematics. Springer-Verlag, Berlin, 2001.
\newblock Reprint of the 1977 edition.

\bibitem[Mor12]{Mor12}
Primo\v{z} Moravec.
\newblock Unramified {B}rauer groups of finite and infinite groups.
\newblock {\em Amer. J. Math.}, 134(6):1679--1704, 2012.

\bibitem[Pas18]{PascoePAMS}
James~E. Pascoe.
\newblock Positivstellens\"{a}tze for noncommutative rational expressions.
\newblock {\em Proc. Amer. Math. Soc.}, 146(3):933--937, 2018.

\bibitem[Pey08]{Pey08}
Emmanuel Peyre.
\newblock Unramified cohomology of degree 3 and {N}oether's problem.
\newblock {\em Invent. Math.}, 171(1):191--225, 2008.

\bibitem[PS85]{PS85}
Claudio Procesi and Gerald Schwarz.
\newblock Inequalities defining orbit spaces.
\newblock {\em Invent. Math.}, 81(3):539--554, 1985.

\bibitem[Rie16]{Riener}
Cordian Riener.
\newblock Symmetric semi-algebraic sets and non-negativity of symmetric
  polynomials.
\newblock {\em J. Pure Appl. Algebra}, 220(8):2809--2815, 2016.

\bibitem[RS06]{RS06}
Mercedes~H. Rosas and Bruce~E. Sagan.
\newblock Symmetric functions in noncommuting variables.
\newblock {\em Trans. Amer. Math. Soc.}, 358(1):215--232, 2006.

\bibitem[Rud90]{Rud90}
Walter Rudin.
\newblock {\em Fourier analysis on groups}.
\newblock Wiley Classics Library. John Wiley \& Sons, Inc., New York, 1990.
\newblock Reprint of the 1962 original, A Wiley-Interscience Publication.

\bibitem[Sal84]{Sal84}
David~J. Saltman.
\newblock Noether's problem over an algebraically closed field.
\newblock {\em Invent. Math.}, 77(1):71--84, 1984.

\bibitem[Sat14]{satake2014algebraic}
Ichir{\^o} Satake.
\newblock {\em Algebraic structures of symmetric domains}.
\newblock Princeton University Press, 2014.

\bibitem[Ser77]{Ser77}
Jean-Pierre Serre.
\newblock {\em Linear representations of finite groups}.
\newblock Springer-Verlag, New York-Heidelberg, 1977.
\newblock Translated from the second French edition by Leonard L. Scott,
  Graduate Texts in Mathematics, Vol. 42.

\bibitem[Stu08]{Stu08}
Bernd Sturmfels.
\newblock {\em Algorithms in invariant theory}.
\newblock Texts and Monographs in Symbolic Computation. Springer-Verlag,
  Vienna, second edition, 2008.

\bibitem[vdS87]{vdS87}
A.~J. van~der Schaft.
\newblock Symmetries in optimal control.
\newblock {\em SIAM J. Control Optim.}, 25(2):245--259, 1987.

\bibitem[Vol18]{Vol18}
Jurij Vol\v{c}i\v{c}.
\newblock Matrix coefficient realization theory of noncommutative rational
  functions.
\newblock {\em J. Algebra}, 499:397--437, 2018.

\end{thebibliography}

\bibliographystyle{alpha}

\end{document}